\documentclass[11pt]{article}

\usepackage[margin=1.05in]{geometry}
\usepackage{amsmath,amssymb,amsthm,mathtools}
\usepackage{graphicx}
\usepackage{microtype}
\usepackage[numbers,sort&compress]{natbib}
\usepackage{xcolor}
\usepackage[colorlinks=true,linkcolor=blue!60!black,citecolor=blue!60!black,urlcolor=blue!60!black]{hyperref}
\usepackage[capitalise,noabbrev,nameinlink]{cleveref}
\crefname{assumption}{Assumption}{Assumptions}
\crefname{observation}{Observation}{Observations}

\usepackage{aliascnt}
\usepackage{enumitem}
\usepackage{booktabs}

\newtheorem{theorem}{Theorem}[section]

\newaliascnt{proposition}{theorem}
\newtheorem{proposition}[proposition]{Proposition}
\aliascntresetthe{proposition}

\newaliascnt{corollary}{theorem}
\newtheorem{corollary}[corollary]{Corollary}
\aliascntresetthe{corollary}

\newaliascnt{lemma}{theorem}
\newtheorem{lemma}[lemma]{Lemma}
\aliascntresetthe{lemma}

\theoremstyle{definition}
\newaliascnt{definition}{theorem}

\aliascntresetthe{definition}

\newtheorem{assumption}{Assumption}

\newaliascnt{example}{theorem}
\newtheorem{example}[example]{Example}
\aliascntresetthe{example}

\theoremstyle{remark}
\newaliascnt{remark}{theorem}
\newtheorem{remark}[remark]{Remark}
\aliascntresetthe{remark}

\newaliascnt{observation}{theorem}
\newtheorem{observation}[observation]{Observation}
\aliascntresetthe{observation}

\newcommand{\R}{\mathbb{R}}
\newcommand{\Z}{\mathbb{Z}}
\newcommand{\E}{\mathbb{E}}
\newcommand{\Prob}{\mathbb{P}}
\newcommand{\Var}{\operatorname{Var}}
\newcommand{\sgn}{\operatorname{sgn}}
\newcommand{\artanh}{\operatorname{artanh}}
\newcommand{\PGW}{\mathrm{PGW}}
\newcommand{\CSBM}{\mathrm{CSBM}}
\newcommand{\err}{\mathcal{E}}
\newcommand{\Poi}{\mathrm{Poi}}
\newcommand{\dd}{\,\mathrm{d}}

\title{\bf The Value of Depth in Message Passing\\ on Sparse Graphs: A Kesten--Stigum Dichotomy}
\author{Aseem Baranwal\\
\small David R.\ Cheriton School of Computer Science, University of Waterloo\\
\small \texttt{aseemrb@gmail.com}}
\date{}

\begin{document}
\maketitle

\begin{abstract}
How deep does a graph neural network need to be on a sparse graph? We study
this question in its purest statistical form: node classification on the
sparse contextual stochastic block model ($\CSBM$) with average degree
$\Delta=O(1)$, whose local weak limit is a broadcast-labelled Poisson
Galton--Watson tree. On this limit, prior work derived a message-passing
classifier $h_\ell$ that aggregates, from every vertex $v$ at distance $k\le
\ell$ from the root, the distance-attenuated evidence $2\artanh\!\big(\gamma^k
t(X_v)\big)$, where $\gamma$ is the edge signal and $t$ is a bounded
likelihood-ratio transform of the feature. We prove that the value of depth
for this classifier is governed by a single number, the Kesten--Stigum ratio
$\kappa=\gamma^2\Delta$. Below the Kesten--Stigum threshold ($\kappa<1$),
the error sequence $(\err(\ell))_{\ell\ge0}$ is Cauchy at a geometric rate:
$|\err(\ell)-\err(\ell')|\le C\kappa^{(\ell+1)/3}$ for all $\ell'>\ell$, so
all layers beyond depth $O(\log(1/\epsilon))$ change the error by less than
$\epsilon$; conversely, under mild regularity every sufficiently deep layer still
\emph{changes the decision} with probability at least $c\,\kappa^{\ell/2}$,
matching the empirically sharp saturation exponent. Above the threshold
($\kappa>1$), depth is geometrically
productive: $\err(\ell)$ is driven to a floor at most of order
$1/(\kappa-1)$, set by the branching process, at a geometric rate that can
be taken $\kappa^{-s\ell}$ for any $s<1$ (the floor constant is
unoptimized, and this bound has content only for $\kappa>17$). In both
regimes, no
local classifier of any depth beats the universal floor
$e^{-\Delta}\,\Phi(-\zeta)$ set by isolated roots, where $\zeta$ is the
feature signal-to-noise ratio; conversely, the first layer provably helps,
by an explicit total-variation amount. The proofs use two-type
branching-process martingales, correlation decay of broadcast processes, an
antisymmetry identity for the messages, and anti-concentration. We
complement the theory with simulations, including an exact
belief-propagation baseline on the same trees. The error
curve of $h_\ell$ is mildly non-monotone in $\ell$---the pairwise rule
treats correlated deep evidence as independent, so an optimal finite depth
exists (an exact instance is certified in the appendix)---while exact BP saturates strictly faster than the linearized
rule, at an effective per-layer ratio below $\kappa$ that we identify.
\end{abstract}

\section{Introduction}\label{sec:intro}

Graph neural networks (GNNs) aggregate information over neighbourhoods of
increasing radius as layers are stacked, and the choice of depth is among the
most basic of their design decisions. Empirically, deep graph convolutional
networks \citep{kipf2017semi} often perform \emph{worse} than shallow ones,
a phenomenon usually
attributed to oversmoothing~\citep{li2018deeper,oono2020graph,keriven2022not,
wu2023nonasymptotic} or to bottlenecks in information flow
\citep{alon2021bottleneck}. These explanations concern particular
architectures and their training dynamics. A more basic statistical question
sits underneath them, and it can be posed exactly:
\begin{quote}
\emph{On a sparse graph, how much statistical value does the $\ell$-th layer
of message passing add to the classification of a node?}
\end{quote}

This paper answers the question for node classification on the sparse
contextual stochastic block model \citep[$\CSBM$;][]{deshpande2018contextual},
the canonical setting where each vertex carries both a latent class label and
a feature vector, edges within classes appear with probability $a/n$ and
across classes with probability $b/n$, and the average degree
$\Delta=(a+b)/2$ stays bounded as the graph grows. In this regime the
$\ell$-neighbourhood of a typical vertex converges, in the local weak sense
\citep{benjamini2001recurrence,aldous2004objective}, to a Poisson
Galton--Watson tree carrying a broadcast process: each child keeps its
parent's label with probability $(1+\gamma)/2$, where
$\gamma=(a-b)/(a+b)$ is the edge signal, and every vertex independently draws
a feature from a class-conditional distribution. Local classification on the
sparse $\CSBM$ is thus classification of the root of a noisy broadcast tree
with side observations---the setting of a long and beautiful line of work on
reconstruction on trees
\citep{kesten1966limit,evans2000broadcasting,mossel2001reconstruction,
mossel2003information,janson2004robust} and community detection
\citep{decelle2011asymptotic,massoulie2014community,mossel2015reconstruction,
mossel2018proof,abbe2018community}.

For this limit, \citet{baranwal2023optimality} (see also
\citep{baranwal2024thesis}) derived a message-passing classifier that is
realizable by a GNN and computed its exact generalization error. The
classifier at depth $\ell$ forms the statistic
\begin{equation}\label{eq:intro-statistic}
T_\ell \;=\; \sum_{k=0}^{\ell}\;\sum_{v\,:\,d(o,v)=k}
2\artanh\!\big(\gamma^{k}\, t(X_v)\big),
\qquad
t(x) \;=\; \frac{\rho_{+}(x)-\rho_{-}(x)}{\rho_{+}(x)+\rho_{-}(x)},
\end{equation}
and predicts $\sgn(T_\ell)$ for the root $o$; here $\rho_{\pm}$ are the
class-conditional feature densities. Two structural features of
\eqref{eq:intro-statistic} matter for us. First, the evidence of a vertex at
distance $k$ is passed through the \emph{contraction} $z\mapsto
2\artanh(\gamma^k z)$: messages from depth $k$ are automatically clipped to
$[-2\artanh(\gamma^k),\,2\artanh(\gamma^k)]$, a shrinking window. Depth
attenuation is thus built into the statistically-derived rule, in contrast
with vanilla graph convolutions, which weight all hops of a deep stack
uniformly. Second, the rule is \emph{pairwise}: each vertex contributes
through its distance from the root alone, which makes the statistic a sum of
per-vertex terms and renders a sharp depth analysis possible. The scope
is deliberately statistical: we fix this rule, with its
statistically-derived weights, and use exact belief propagation as the
Bayes baseline---no training is involved. What the results say about
\emph{learned} GNNs is a question of mechanism transfer, which we take up
in \cref{sec:related,sec:discussion}.

Our contribution is a quantitative picture of the map
$\ell\mapsto\err(\ell)$, where $\err(\ell)$ is the misclassification
probability of $h_\ell=\sgn(T_\ell)$ on the limit tree. The answer is a
dichotomy organized by the \emph{Kesten--Stigum ratio}
\begin{equation}\label{eq:ks-ratio}
\kappa \;=\; \gamma^2\Delta \;=\; \frac{(a-b)^2}{2(a+b)},
\end{equation}
the quantity that separates the reconstructible from the non-reconstructible
regime for the binary broadcast process
\citep{kesten1966limit,evans2000broadcasting}
and for the sparse stochastic block model
\citep{decelle2011asymptotic,massoulie2014community,mossel2015reconstruction,
mossel2018proof}.

\subsection{Contributions}\label{sec:contributions}

\paragraph{Below the threshold, depth saturates geometrically
(\cref{thm:below}).} If $\kappa<1$ then for all $\ell'>\ell\ge 0$,
\[
|\err(\ell)-\err(\ell')| \;\le\; C\,\kappa^{(\ell+1)/3},
\]
with an explicit constant $C$ depending only on the model. Consequently
$\err(\infty)=\lim_{\ell\to\infty}\err(\ell)$ exists and depth
$\ell(\epsilon)= \lceil 3\log(C/\epsilon)/\log(1/\kappa)\rceil$ already
places the classifier within $\epsilon$ of everything deeper. The mechanism
is a cancellation revealed by an antisymmetry identity for the messages: the
\emph{expected} evidence carried by generation $k$ scales as $\kappa^k$, not
as the naive count $(\gamma\Delta)^k$ of signed vertices, and below the
threshold its second moment vanishes geometrically. A companion lower
bound (\cref{thm:fliplower}) shows the saturation is genuinely
geometric and pins its exponent from below: under mild regularity of the
feature law, every layer beyond an explicit initial depth flips the
decision with probability at
least $c\,\kappa^{\ell/2}$---the exponent the simulations identify as
sharp.

\paragraph{Above the threshold, depth amplifies geometrically
(\cref{thm:above}).} If $\kappa>1$ then for every $\ell\ge 1$ and
every $s\in(0,1)$,
\[
\err(\ell)\;\le\; \frac{16}{\kappa-1} \;+\; 2\,\kappa^{-s\ell}
\;+\;\exp\!\Big(\!-c_0\,(\kappa-1)\,\frac{\kappa^{(1-s)\ell-1}}{\ell^2}\Big)
\;+\;\exp\!\Big(\!-\tfrac{\vartheta}{2}\,\kappa^{\ell}\Big),
\]
where $c_0=\vartheta^2/(8c_\gamma^2)$ and $\vartheta=\E_+[t(X)^2]$ is the
feature information of \cref{ass:informative}. Each additional
layer multiplies the signal-to-noise ratio of the decision statistic by
$\kappa$, and the error is driven to a floor governed by the fluctuations
and extinction behaviour of the underlying branching process. The
genuinely signal-driven error terms decay super-geometrically; the
geometric term $\kappa^{-s\ell}$ is a truncation cost with $s$ a free
parameter, so the approach to the floor is faster than every geometric
rate $\kappa^{-s\ell}$ with $s<1$. We flag at the outset that the floor
constant is unoptimized and the bound is non-vacuous only for $\kappa>17$
(hence average degree above $17$); in the sparse configurations we
simulate, the above-threshold half of the dichotomy is supported by the
experiments, not certified by the theorem
(\cref{sec:experiments}).

\paragraph{A universal floor (\cref{prop:floor}).} For every depth
$\ell$ and \emph{every} $\ell$-local classifier whatsoever,
\[
\Prob(\text{error}) \;\ge\; e^{-\Delta}\,\varepsilon_{\mathrm{feat}},
\]
where $\varepsilon_{\mathrm{feat}}$ is the Bayes error of the feature mixture
($\Phi(-\zeta)$ for the Gaussian mixture with signal-to-noise ratio $\zeta$).
Depth is never a substitute for feature signal on a sparse graph: a
constant fraction of vertices are isolated and must be classified from their
features alone. Conversely, the first layer provably helps
(\cref{prop:first}): $\err(0)-\err(1)$ is bounded below by an
explicit positive quantity built from the total-variation distance of the
depth-$1$ observation and the mass the root's log-likelihood ratio puts
near zero---a certified strict improvement, and the only \emph{lower}
bound on the value of a layer we prove (deeper layers are bounded above
only; see \cref{sec:discussion}).

\paragraph{Depth prescriptions (\cref{cor:depth}).}
In both regimes, depth
$O(\log(1/\epsilon))$ \emph{suffices} for $\epsilon$-accuracy relative to
the relevant limit, independent of the graph size: below the
threshold because deeper layers are provably (nearly) worthless, above it
because the error reaches its floor within $O(\log_\kappa(1/\epsilon))$
layers. We emphasize that the prescription is $O(\cdot)$ and not
$\Theta(\cdot)$: apart from the first
layer (\cref{prop:first}), we do not prove that fewer than
logarithmically many layers are insufficient. The simulations are
consistent with a matching
$\kappa^{\ell/2}$ lower bound below the threshold, which we leave open
(\cref{sec:discussion}). On a finite graph of $n$ vertices, local
information extends to radius $O(\log n)$; the useful depth is a constant,
far below that horizon.

\paragraph{Two structural remarks.} First, we observe
(\cref{rem:bp}) that for $\ell\ge2$ the pairwise rule
\eqref{eq:intro-statistic} is the \emph{linearization} of exact belief
propagation on the observed tree: the two coincide on stars and on
single-source paths, and differ as soon as feature evidence stacks along a
path, because the pairwise rule attenuates each source separately while BP
attenuates the combined subtree evidence. Second, $\err(\ell)$ is mildly
\emph{non-monotone}: beyond a finite optimal depth, additional layers
\emph{hurt} the pairwise rule, because it treats deep messages that share
ancestry---hence are positively correlated---as independent
(\cref{obs:nonmono}); an exact instance---a two-point
feature law with $\err(3)-\err(2)\ge0.037$---is certified in
\cref{app:certified}. Exact BP cannot
degrade with depth (its depth-$\ell$ posterior is Bayes for the depth-$\ell$
observation), so this is a genuine cost of linearization; our saturation
theorem caps the damage below the threshold. The comparison also
uncovers a rate-level asymmetry: exact BP's decisions settle at a
strictly faster geometric rate than the pairwise rule's, governed by an
effective ratio $\kappa_{\mathrm{BP}}<\kappa$ that we identify and verify
by population dynamics (\cref{sec:experiments}).

\subsection{Related work}\label{sec:related}

\paragraph{Reconstruction on trees and the Kesten--Stigum threshold.} The
ratio \eqref{eq:ks-ratio} appears first in the study of multi-type branching
processes \citep{kesten1966limit}: the signed population martingale is
$L^2$-bounded if and only if $\kappa>1$. For broadcast processes, $\kappa>1$
implies reconstructability of the root from deep leaves
\citep{evans2000broadcasting,mossel2003information}, and the \emph{robust}
reconstruction problem---root recovery from noisy observations of deep
generations---is determined by $\kappa$ exactly \citep{janson2004robust}. Our
below-threshold theorem is a quantitative, feature-decorated relative of
robust reconstruction impossibility: it bounds the total influence of all
generations beyond $\ell$ on the \emph{decision}, at an explicit geometric
rate, for the specific statistic used by the message-passing classifier. Our
above-threshold theorem is an effective version of reconstructability, again
with rates, for the same statistic.

\paragraph{Sparse SBM and CSBM.} For the sparse stochastic block model
without features, $\kappa>1$ is the threshold for detecting communities
\citep{decelle2011asymptotic,massoulie2014community,mossel2015reconstruction,
mossel2018proof}; see \citet{abbe2018community} for a survey. The contextual
model couples the graph with a Gaussian (or general) mixture on features
\citep{deshpande2018contextual}. Labelled/side-information variants of the
block model were studied by \citet{kanade2016global}, who showed that
vanishing amounts of label information interact subtly with the threshold.
The classifier we analyze is the message-passing architecture of
\citet{baranwal2023optimality}, which computes, for each vertex of the local
neighbourhood, evidence attenuated through the $k$-step class-transition
matrix, together with its exact generalization error at fixed depth and its
realizability by a GNN with $O(\ell)$ message-passing rounds; the thesis
\citep[Ch.~5]{baranwal2024thesis} gives an expanded exposition. Our
contribution is orthogonal: we fix the rule and quantify the \emph{marginal
value of depth}. A caution on regimes: the sharp \emph{detection}
thresholds for the CSBM, conjectured by \citet{deshpande2018contextual} and
proved by \citet{lu2023contextual}, live in the high-dimensional
proportional regime ($d/n\to$ const, per-vertex feature signal-to-noise
vanishing), where features \emph{shift} the location of the threshold. Our
regime is different---$d$ is fixed and the per-vertex feature
signal-to-noise ratio is a constant---so the root is always non-trivially
classifiable from its own feature, detection is never at issue, and
$\kappa$ governs only the \emph{value of depth}. This is why the
graph-only Kesten--Stigum ratio appears here with no feature correction.

\paragraph{Linearized BP and the non-backtracking operator.} The
identification of the pairwise rule as \emph{linearized} belief
propagation (\cref{rem:bp}) has a substantial lineage in community
detection: linearizing BP around its uninformative fixed point yields the
non-backtracking operator of \citet{krzakala2013spectral}, whose spectrum
detects communities down to the Kesten--Stigum threshold
\citep{bordenave2018nonbacktracking}, and the achievability of the
threshold in general block models was established through a linearized
\emph{acyclic} BP \citep{abbe2016achieving}. The pairwise rule is the
natural vertex-wise expression of the same linearization on a tree: each
vertex's evidence reaches the root along its unique (automatically
non-backtracking) path, attenuated by a factor $\gamma$ per edge---the
$\gamma^k$ in \eqref{eq:intro-statistic}---while exact BP attenuates
whole subtree \emph{aggregates} through the same per-edge contraction.
Those works use the linearization as an \emph{algorithm} near the
threshold, where BP's fixed point is uninformative; we quantify, with
feature side information, what the linearized rule gains or forfeits
\emph{per layer} at any fixed $\kappa$. The closest \emph{learned}
counterpart is the line-graph network of \citet{chen2019supervised},
which trains a GNN built on the non-backtracking operator to the
Kesten--Stigum threshold on the SBM; our fixed-weight analysis can be
read as the statistical skeleton such architectures approximate.

\paragraph{Effects of graph convolutions in denser regimes.} When the
average degree grows like $\log^2 n$ or faster, graph convolutions improve
linear separability thresholds by degree-dependent factors
\citep{baranwal2021graph,baranwal2023effects}, graph attention
\citep{velickovic2018graph} has provable limitations in hard regimes
\citep{fountoulakis2023graph}, and repeated
\emph{corrected} convolutions (with the principal component removed) enjoy
exponential improvements per round \citep{wang2024analysis}. These analyses
rely on degree concentration, which fails precisely in the sparse regime we
study; conversely, the tree-limit machinery used here has no dense analogue.
The two regimes give complementary answers to the depth question: in dense
graphs a couple of convolutions already mix the whole graph, while in sparse
graphs depth is information-limited and the Kesten--Stigum ratio decides
whether the information is there at all.

\paragraph{Oversmoothing and depth in GNNs.} The empirical failure of deep
GCNs motivated a line of theory on oversmoothing
\citep{li2018deeper,oono2020graph,keriven2022not,wu2023nonasymptotic},
oversquashing \citep{alon2021bottleneck}, and depth--width expressivity
limits \citep{loukas2020graph}. Our results offer a
complementary, purely statistical account for sparse graphs: the
statistically-derived rule attenuates depth-$k$ evidence by $\gamma^k$ and
consequently never \emph{collapses} with depth the way uniform aggregation
does---below the threshold its error saturates, with at most a mild upward
drift that \cref{thm:below} caps
(\cref{obs:nonmono}), and above the threshold it improves to a
floor---while uniform aggregation corresponds to ignoring the
attenuation and is mismatched to the broadcast structure. The mild
non-monotonicity we observe is a distinct, finer effect: a mismatch between
the pairwise independence assumption and ancestral correlations, invisible at
the level of oversmoothing analyses.

\section{Model and preliminaries}\label{sec:model}

\subsection{The sparse CSBM and its local limit}\label{sec:csbm}

For $n\in\mathbb{N}$ and constants $a>b>0$, the binary symmetric sparse
contextual stochastic block model $\CSBM(n,\mathbb{P}_{\pm},a/n,b/n)$
generates: labels $y_1,\dots,y_n\in\{\pm1\}$ i.i.d.\ uniform; an undirected
graph $G_n$ on $[n]$ where each pair $u\ne v$ is an edge independently with
probability $a/n$ if $y_u=y_v$ and $b/n$ otherwise; and features $X_u\in\R^d$
drawn independently with $X_u\sim \mathbb{P}_{y_u}$. We write
\[
\Delta=\frac{a+b}{2}\,,\qquad \gamma=\frac{a-b}{a+b}\in(0,1)\,,\qquad
\kappa=\gamma^2\Delta=\frac{(a-b)^2}{2(a+b)}\,,
\]
and assume throughout that $\Delta>1$. Note that $\kappa<\Delta$ always
(since $\gamma<1$): large values of $\kappa$ demand at least as large an
average degree. The heterophilous case $b>a$ is
symmetric under $\gamma\mapsto-\gamma$ and is discussed in
\cref{sec:discussion}.

As $n\to\infty$ with $(a,b,\mathbb{P}_\pm)$ fixed, the feature-decorated
rooted graph $(G_n,o)$ with a uniformly random root $o$ converges locally
weakly \citep{benjamini2001recurrence,aldous2004objective,
mossel2015reconstruction,deshpande2018contextual} to the following limit
object $(T,o,y,X)$:
\begin{enumerate}[leftmargin=2em,itemsep=0.1em]
\item the tree $T$ is a Poisson Galton--Watson tree $\PGW(\Delta)$ rooted at
$o$: every vertex independently has $\Poi(\Delta)$ children;
\item labels follow a \emph{broadcast process}: $y_o$ is uniform on
$\{\pm1\}$, and each child independently keeps its parent's label with
probability $\tfrac{1+\gamma}{2}$ and flips it with probability
$\tfrac{1-\gamma}{2}$;
\item features are conditionally independent given the labels, with
$X_v\sim\mathbb{P}_{y_v}$.
\end{enumerate}
Equivalently (by Poisson thinning) each vertex has $\Poi(a/2)$ children of
its own label and, independently, $\Poi(b/2)$ children of the opposite label;
in particular the shape of $T$ is independent of the labels. We use the
following notation: $N_k=N_k(o)$ is the set of vertices at graph distance
exactly $k$ from the root, $S_k=|N_k|$, and for $v\in T$ we write
$\sigma_v=y_v\,y_o\in\{\pm1\}$ for its label relative to the root. All
probabilities and expectations below are on this limit object; the transfer
of our conclusions back to finite graphs is discussed in
\cref{rem:finite}.

\subsection{The pairwise message-passing classifier}\label{sec:classifier}

Let $\rho_\pm$ denote densities of $\mathbb{P}_{\pm}$ with respect to a
common base measure $\mu$ (\cref{ass:sym} below ensures these
exist and are mutually absolutely continuous), let
$\psi=\rho_+/\rho_-$ be the likelihood ratio, and define the bounded
transform
\begin{equation}\label{eq:t-def}
t(x)\;=\;\frac{\rho_+(x)-\rho_-(x)}{\rho_+(x)+\rho_-(x)}\;\in\;[-1,1],
\qquad \log\psi(x) = 2\artanh(t(x)).
\end{equation}
For depth $\ell\ge0$, the \emph{pairwise message-passing classifier} is
\begin{equation}\label{eq:classifier}
h_\ell \;=\; \sgn(T_\ell),\qquad
T_\ell \;=\; \sum_{k=0}^{\ell}\sum_{v\in N_k} M_k(X_v),\qquad
M_k(x) \;=\; 2\artanh\!\big(\gamma^k\,t(x)\big).
\end{equation}
Note $M_0(x)=\log\psi(x)$, so at $\ell=0$ the rule is the feature-only Bayes
classifier. The rule \eqref{eq:classifier} is precisely the classifier
derived by \citet{baranwal2023optimality} (Theorem~1 and Corollary~1.1
there\footnote{Throughout, theorem numbers and the form of the classifier
refer to the updated arXiv version of \citet{baranwal2023optimality}
(arXiv:2305.10391), which states the rule in the marginalized form used
here: each vertex contributes the likelihood of its own feature
\emph{marginalized} over its label given the root's, replacing the joint
maximization over neighbourhood labels used in the proceedings version;
see also \citet[Ch.~5]{baranwal2024thesis}.}): there, the message
contributed by a vertex $v$ at distance $k$ is
$\log\langle \rho(X_v),Q^k_{y_o}\rangle$-type evidence attenuated through the
$k$-step class-transition matrix, which in the binary symmetric case
simplifies to exactly $M_k(X_v)$ using
$\tfrac{p_k}{q_k}=\tfrac{1+\gamma^k}{1-\gamma^k}$. It is implementable by a
message-passing GNN with $\ell$ rounds, and for $\ell=1$ it coincides with
the exact Bayes rule for the depth-$1$ observation. We define the error
\begin{equation}\label{eq:error}
\err(\ell) \;=\; \Prob\big(y_o\,T_\ell \le 0\big),
\end{equation}
counting ties conservatively as errors (under \cref{ass:density}
ties are null events).

Two elementary properties of the messages drive everything that follows.
First, they are \emph{clipped}: for $k\ge1$, $|M_k(x)|\le c_k :=
2\artanh(\gamma^k)$, and
$c_k\le c_\gamma\gamma^k$ with $c_\gamma := 2/(1-\gamma^2)$, so a vertex at
depth $k\ge1$ can move the statistic by at most $O(\gamma^k)$ no matter how
loud its feature (only the root's own evidence $M_0=\log\psi$ is
unbounded). Second, they are \emph{antisymmetric} in the class
(\cref{lem:messages}): the conditional means $\pm m_k$ under the two
classes cancel to first order when summed over a generation whose signed
excess is small, and $m_k\asymp\gamma^k$. Since generation $k$ contains
$\approx\Delta^k$ vertices with signed excess $\approx(\gamma\Delta)^k$, the
generation contributes signal $\approx m_k(\gamma\Delta)^k\asymp\kappa^k$ and
noise of standard deviation $\approx\gamma^k\sqrt{\Delta^k}=\kappa^{k/2}$:
the Kesten--Stigum ratio emerges as the per-layer signal gain.

\subsection{Assumptions}\label{sec:assumptions}

\begin{assumption}[symmetric, mutually absolutely continuous features]
\label{ass:sym}
$\mathbb{P}_+$ and $\mathbb{P}_-$ are mutually absolutely continuous, with
densities $\rho_\pm$ with respect to a base measure $\mu$, and there is a
$\mu$-measure-preserving involution $\tau:\R^d\to\R^d$ with
$\rho_-(x)=\rho_+(\tau(x))$ for $\mu$-a.e.\ $x$.
\end{assumption}

\begin{assumption}[informative features]\label{ass:informative}
$\vartheta := \E_+\big[t(X)^2\big]>0$, where $\E_+$ denotes expectation under $X\sim\mathbb{P}_+$.
\end{assumption}

\begin{assumption}[non-atomic log-likelihood ratio; only for
\cref{thm:below}]\label{ass:density}
Under $X\sim\mathbb{P}_+$, the random variable $\log\psi(X)$ has a density on
$\R$ bounded by $B<\infty$.
\end{assumption}

\begin{example}[Gaussian mixture]\label{ex:gaussian}
Let $\mathbb{P}_\pm=\mathcal{N}(\pm\boldsymbol{\mu},\sigma^2 I_d)$ with
$\boldsymbol{\mu}\neq\mathbf{0}$, and set
$\zeta=\|\boldsymbol{\mu}\|_2/\sigma>0$. Then \cref{ass:sym} holds
with $\tau(x)=-x$;
$\log\psi(X)\,|\,y=+1\sim\mathcal{N}(2\zeta^2,4\zeta^2)$, so
\cref{ass:density} holds with $B=(2\zeta\sqrt{2\pi})^{-1}$; and
$\vartheta=\E\tanh^2(Z/2)>0$ for $Z\sim\mathcal{N}(2\zeta^2,4\zeta^2)$, so
\cref{ass:informative} holds. The feature-only Bayes error is
$\err(0)=\Phi(-\zeta)$.
\end{example}

\cref{ass:sym} encodes the symmetry of the two-class model and is
standard in this line of work (the Gaussian mixture with opposite means being
the canonical case). Under it, $t(\tau(x))=-t(x)$ and hence
$M_k(\tau(x))=-M_k(x)$: swapping the classes negates every message. This
innocuous-looking identity is the engine of the below-threshold
cancellation---and the mechanism is genuinely symmetry-bound: under class
asymmetry the mean message of generation $k$ acquires a label-independent
drift that $\sgn(T_\ell)$ does not re-center, and the natural object
becomes a recentred statistic (see \cref{sec:discussion}).
\cref{ass:density} enters exactly once, through the
anti-concentration \cref{lem:anticonc}; it excludes atomic features
(e.g.\ discrete node attributes) from \cref{thm:below} as stated.
The natural relaxation is to replace $2Bs$ there by the L\'evy
concentration function of $T_{\ell'}$ at scale $s$, which is what the
proof actually uses; we do not pursue the bookkeeping of ties here.

\section{Main results}\label{sec:results}

Throughout this section, \crefrange{ass:sym}{ass:informative}
are in force, and $c_\gamma=2/(1-\gamma^2)$,
$\vartheta=\E_+[t(X)^2]$.

\subsection{Below the threshold: geometric saturation}

\begin{theorem}[depth saturation, $\kappa<1$]\label{thm:below}
Let $\kappa<1$ and let \cref{ass:density} hold with bound $B$. Set
\[
C_1 \;=\; \frac{3\,c_\gamma^2\,\vartheta}{(1-\kappa)^3}.
\]
Then for all $0\le\ell<\ell'<\infty$,
\begin{equation}\label{eq:below-main}
\big|\err(\ell)-\err(\ell')\big| \;\le\;
3\,B^{2/3}\,C_1^{1/3}\;\kappa^{(\ell+1)/3}.
\end{equation}
In particular, $\err(\infty):=\lim_{\ell'\to\infty}\err(\ell')$ exists and
$|\err(\ell)-\err(\infty)|\le 3B^{2/3}C_1^{1/3}\kappa^{(\ell+1)/3}$ for
every $\ell\ge 0$. Moreover $T_\ell$ converges in $L^2$ to a limit
statistic $T_\infty$, and $\err(\infty)=\Prob(y_o T_\infty\le0)$: the
limit is the error of a bona fide infinite-depth classifier, not only a
limit of numbers.
\end{theorem}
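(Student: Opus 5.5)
The plan is a second-moment bound on the tail statistic followed by anti-concentration. Write $R_{\ell,\ell'}=T_{\ell'}-T_\ell=\sum_{k=\ell+1}^{\ell'} G_k$, where $G_k=\sum_{v\in N_k}M_k(X_v)$. For any threshold $s>0$, the decisions of $T_\ell$ and $T_{\ell'}$ can differ only if $|R_{\ell,\ell'}|>s$ or if $|T_{\ell'}|\le s$. Hence
\[
|\err(\ell)-\err(\ell')|\le \Prob(|R_{\ell,\ell'}|>s)+\Prob(|T_{\ell'}|\le s)\le \frac{\E R_{\ell,\ell'}^2}{s^2}+2Bs .
\]
The second term is bounded by the anti-concentration \cref{lem:anticonc}. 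It is there that \cref{ass:density} enters: conditionally on everything except the root feature, $T_{\ell'}=\log\psi(X_o)+(\text{independent})$, so its density is at most $B$. To handle conditioning on the sign of $y_o$, I would use symmetry via $\tau$. Optimizing over $s$ by taking $s=(\E R^2/B)^{1/3}$ gives a bound of order $B^{2/3}(\E R^2)^{1/3}$, with the constant $3$ absorbing the factors. The target then reduces to showing $\E R_{\ell,\ell'}^2\le C_1\kappa^{\ell+1}$.

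For the second moment, I expand $\E R^2=\sum_{j,k>\ell}\E[G_jG_k]$. The key input is the antisymmetry \cref{lem:messages}: $\E[M_k(X_v)\mid\sigma_v, y_o]=\sigma_v y_o m_k$ with $|m_k|\le c_\gamma\gamma^k\vartheta$-type bounds, and $\E M_k^2\le c_\gamma^2\gamma^{2k}\vartheta$. The bound $|2\artanh(\gamma^k t)|\le c_\gamma\gamma^k|t|$ gives $\E M_k^2\le c_\gamma^2\gamma^{2k}\E t^2$, and $\E_\pm t^2=\vartheta$ by symmetry. For distinct vertices $u,v$, conditional independence given labels gives $\E[M_j(X_u)M_k(X_v)\mid\text{labels}]=m_jm_k\sigma_u\sigma_v$. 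The broadcast correlation is $\E[\sigma_u\sigma_v]=\gamma^{d(u,v)}$. With $u\in N_j$, $v\in N_k$ and a common ancestor at depth $i$, we have $d(u,v)=j+k-2i$. The expected number of such pairs is about $\Delta^{j+k-i}$, so the cross term is bounded by
\[
\sum_i \Delta^{j+k-i}\gamma^{j+k-2i}\gamma^{j+k}c_\gamma^2\vartheta^2 \lesssim c_\gamma^2\vartheta\,\kappa^{j+k-i}\Delta^{-?}.
\]
This needs care. Each vertex's message is bounded by $\gamma^k$ times a mean-type factor $m_k\lesssim\gamma^k$, and $\Delta^{j+k-i}\gamma^{2j+2k-2i}=\kappa^{j+k-i}$. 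The leftover factor $\gamma^{2i}\le1$ can be discarded. The diagonal terms give $\Delta^k c_\gamma^2\gamma^{2k}\vartheta=c_\gamma^2\vartheta\kappa^k$.

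Summing $\kappa^{j+k-i}$ over $0\le i\le\min(j,k)$ and $j,k\ge\ell+1$ produces three geometric series. Writing $i\le j\le k$, the exponent is at least $k\ge \ell+1$, and the sum is bounded by $\kappa^{\ell+1}/(1-\kappa)^3$ up to a constant $3$ from ordering. This gives exactly $C_1\kappa^{\ell+1}$.

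The main obstacle is making the pair count rigorous. The expected number of pairs $(u,v)\in N_j\times N_k$ with last common ancestor at depth $i$ in $\PGW(\Delta)$ must be shown to be at most $\Delta^{i}\cdot\Delta^{j-i}\Delta^{k-i}$, where the Poisson factorial moment $\E[Z(Z-1)]=\Delta^2$ makes this exact. The argument must also be arranged so that the total constant comes out no worse than $3c_\gamma^2\vartheta/(1-\kappa)^3$. In particular, the factors $m_jm_k\le c_\gamma^2\gamma^{j+k}\vartheta$ should be used through Cauchy–Schwarz, $|m_k|\le\sqrt{\E M_k^2}$, rather than with a $\vartheta^2$.

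The final assertions follow from the same estimate. Taking $\ell'\to\infty$, the bound on $\E(T_{\ell'}-T_\ell)^2$ shows that $(T_\ell)$ is Cauchy in $L^2$, so it has a limit $T_\infty$. The Cauchy property of $\err(\ell)$ gives existence of $\err(\infty)$. Finally, rerunning the bound from the first paragraph with $T_\infty$ in place of $T_{\ell'}$ shows $\err(\ell)\to\Prob(y_oT_\infty\le0)$. For this step, $T_\infty$ still has density at most $B$ given the other variables, because the root term is unchanged.
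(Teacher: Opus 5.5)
Your proposal is correct and follows the paper's proof essentially step for step: the same reduction $|\err(\ell)-\err(\ell')|\le 2Bs+\E R^2/s^2$ via Chebyshev and anti-concentration of the root term, the same choice $s=(\E R^2/B)^{1/3}$, the same tail bound $\E R^2\le C_1\kappa^{\ell+1}$ with an identical constant, and the same $L^2$-Cauchy argument identifying $T_\infty$. The only difference is cosmetic. Your last-common-ancestor pair count $\sum_{i\le j\wedge k}\Delta^{j+k-i}\gamma^{j+k-2i}=(\gamma\Delta)^{j+k}\sum_{i\le j\wedge k}\kappa^{-i}$ is exactly the paper's Kesten--Stigum martingale identity $\E[D_jD_k]=(\gamma\Delta)^{j+k}\E[W_{j\wedge k}^2]$. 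So there is no leftover $\gamma^{2i}$ factor, and the $\vartheta$ versus $\vartheta^2$ worry is moot since $\vartheta\le 1$; your diagonal/off-diagonal split is just the paper's conditional-variance/conditional-mean split regrouped.
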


The theorem controls the total contribution of all layers beyond $\ell$
in one statement (a per-increment geometric bound would recover this by
telescoping, at the price of a further $(1-\kappa^{1/3})^{-1}$ in the
constant, so the uniformity is a convenience of the formulation rather
than extra strength). Its content is best appreciated against the naive
first-moment bound: generation $k$ contains $\Delta^k$ vertices in
expectation, each contributing a message of size up to
$c_\gamma\gamma^k$, so the crude expected total perturbation from depths
beyond $\ell$ is of order $\sum_{k>\ell}(\gamma\Delta)^k$, which
\emph{diverges}
whenever $\gamma\Delta>1$---as happens in much of the sub-threshold regime
(e.g.\ $\Delta=3$, $\gamma=0.55$ gives $\gamma\Delta=1.65$ yet
$\kappa=0.91<1$). The saturation rate $\kappa^{\ell/3}$ is invisible to
first-moment arguments: it comes from the exact cancellation of the two
classes' mean messages (\cref{lem:messages}) together with the
second-moment structure of the broadcast process (\cref{lem:ks}), which
together show that the \emph{net} evidence of generation $k$ has second
moment $O(\kappa^k)$. We expect the exponent $\ell/3$ (an artifact of
Chebyshev plus anti-concentration) to be improvable to the natural
$\ell/2$; the experiments in \cref{sec:experiments} indeed show decay
at rate $\kappa^{\ell/2}$, and \cref{thm:fliplower} below proves
the matching $\kappa^{\ell/2}$ \emph{lower} bound at the level of
decision flips. \cref{ass:density} can also be dispensed
with: \cref{app:complements} records the L\'evy-concentration
form of \eqref{eq:below-main} that the proof actually yields---the right
statement for atomic (e.g.\ discrete) feature laws.

\begin{theorem}[decision churn: a matching flip-rate lower bound]
\label{thm:fliplower}
Let $\kappa<1$ and let
\crefrange{ass:sym}{ass:informative} hold
(\cref{ass:density} is not needed). Assume in addition the
regularity: (i) $\sigma_t^2:=\Var_+\big(t(X)\big)>0$, and (ii) the density
of $\log\psi(X)$ under $\mathbb{P}_+$ is bounded below by $\beta>0$ on
$[-K_0,K_0]$, where $K_0:=1+8\sqrt{C_1\kappa\Delta/(\Delta-1)}$. Then
there are $c>0$ and $\ell_0$, explicit in the model constants and the
absolute Berry--Esseen constant, such that
\[
\Prob\big(h_\ell\neq h_{\ell+1}\big)\;\ge\;c\,\kappa^{\ell/2}
\qquad\text{for all }\ell\ge\ell_0.
\]
Both regularity conditions hold, with explicit constants, for the
Gaussian mixture of \cref{ex:gaussian}.
\end{theorem}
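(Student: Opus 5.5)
A decision flip between depths $\ell$ and $\ell+1$ happens exactly when $T_\ell$ and $T_{\ell+1}=T_\ell+D_{\ell+1}$ have opposite signs, where $D_{\ell+1}=\sum_{v\in N_{\ell+1}}M_{\ell+1}(X_v)$. We therefore lower-bound
\[
\Prob\big(h_\ell\neq h_{\ell+1}\big)\;\ge\;\Prob\big(0<T_\ell<-D_{\ell+1}\big)+\Prob\big(-D_{\ell+1}<T_\ell<0\big).
\]
Condition on the tree $T$ up to depth $\ell+1$, on all labels, and on the features of $N_0,\dots,N_\ell$. Then $D_{\ell+1}$ is a sum of $S_{\ell+1}$ conditionally independent bounded terms. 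Each term $M_{\ell+1}(X_v)$ equals $c\gamma^{\ell+1}t(X_v)$ up to a factor $1+O(\gamma^{2(\ell+1)})$, and its conditional variance is $\asymp\gamma^{2(\ell+1)}\sigma_t^2$; this is where regularity (i) enters.

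\textbf{Step 1: the increment is typically of size $\kappa^{\ell/2}$ and two-sided.} We restrict to the event $\mathcal{G}$ on which $S_{\ell+1}\ge\tfrac12\E S_{\ell+1}=\tfrac12\Delta^{\ell+1}$. Since $S_k/\Delta^k$ converges a.s. to a martingale limit $W$ and $W>0$ on survival, $\Prob(\mathcal{G})$ is bounded below uniformly in $\ell$, by roughly the survival probability; this needs $\Delta>1$. On $\mathcal{G}$, the conditional standard deviation of $D_{\ell+1}$ is at least $c'\gamma^{\ell+1}\Delta^{(\ell+1)/2}\sigma_t=c'\kappa^{(\ell+1)/2}\sigma_t$. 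The conditional mean is $m_{\ell+1}\sum_{v}\sigma_v$ by the antisymmetry of \cref{lem:messages}, and its second moment is $O(\kappa^{\ell+1})\cdot\kappa^{\ell+1}$ by \cref{lem:ks}. Hence the mean is negligible relative to the standard deviation, except on an event of small probability, which we remove via Markov's inequality.

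We then apply Berry--Esseen to the bounded summands. The third-moment ratio is $O(S_{\ell+1}^{-1/2})=O(\Delta^{-\ell/2})\to0$. So for $\ell\ge\ell_0$, $\Prob(D_{\ell+1}\le-\eta\kappa^{(\ell+1)/2}\mid\cdot)\ge p_0>0$ and likewise for $D_{\ell+1}\ge\eta\kappa^{(\ell+1)/2}$, with explicit $\eta,p_0$.

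\textbf{Step 2: $T_\ell$ has mass near zero at scale $\kappa^{\ell/2}$.} We need $\Prob(T_\ell\in(0,s)\mid y_o=+)\gtrsim s$, with $s=\eta\kappa^{(\ell+1)/2}$, jointly with the event from Step 1. For this we write $T_\ell=\log\psi(X_o)+R_\ell$, where $R_\ell$ is independent of $X_o$ given the labels. The proof of \cref{thm:below} (via \cref{lem:messages,lem:ks}) gives $\E R_\ell^2\le C_1\kappa\Delta/(\Delta-1)$, and this is exactly the quantity inside $K_0$. By Chebyshev, $|R_\ell|\le K_0-1$ with probability at least $1-1/64$. On that event, the interval $(-R_\ell,-R_\ell+s)$ lies inside $[-K_0,K_0]$, so regularity (ii) gives conditional probability at least $\beta s$ that $\log\psi(X_o)$ falls in it.

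Crucially, $X_o$ is independent of the generation-$(\ell+1)$ features given the labels. So the root-feature event and the increment event of Step 1 multiply, provided we condition in the right order: first tree and labels, then $R_\ell$, then $X_o$ and the depth-$(\ell+1)$ features independently. Combining the pieces,
\[
\Prob(\text{flip})\;\ge\;\Prob(\mathcal{G}\cap\{|R_\ell|\le K_0-1\})\,p_0\,\beta\,\eta\,\kappa^{(\ell+1)/2},
\]
which is $c\kappa^{\ell/2}$.

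\textbf{Main obstacle.} The delicate point is making these intersections quantitative. $\mathcal{G}$, the Chebyshev event for $R_\ell$, and the smallness of the conditional mean all depend on the same tree, so I would bound the probability of their intersection by $\Prob(\mathcal{G})-1/64-o(1)$. This requires $\Prob(\mathcal{G})$ to be bounded below by more than $1/64$. That may fail when $\Delta$ is close to $1$, where survival is rare. I would therefore first try replacing the Chebyshev level $1/64$ by a threshold tuned to the survival probability, and if necessary adjust $K_0$'s constant, which is presumably why $\Delta/(\Delta-1)$ appears there.

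Finally, for the Gaussian mixture of \cref{ex:gaussian}, $\log\psi(X)$ is $\mathcal{N}(2\zeta^2,4\zeta^2)$, whose density is bounded below on any compact interval, and $t(X)=\tanh(Z/2)$ is nondegenerate. This verifies conditions (i) and (ii) with explicit constants.
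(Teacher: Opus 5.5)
Your architecture is the paper's: condition on the tree and labels so that $X_o$, the depth-$\le\ell$ features, and the generation-$(\ell+1)$ features are independent; trap $\log\psi(X_o)$ in a window of width $\asymp\kappa^{(\ell+1)/2}$ inside $[-K_0,K_0]$ after a Chebyshev bound on the non-root part $R_\ell$; and get a negative swing of the newest generation by Berry--Esseen on $\{S_{\ell+1}\ge\frac12\Delta^{\ell+1}\}$.

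\textbf{The mean step is false as written.} The conditional mean of the new-generation sum is $m_{\ell+1}\sum_{v\in N_{\ell+1}}\sigma_v$. Below the threshold the signed excess is noise-dominated. \cref{lem:ks}(iv) gives $\E\big[(\sum_{v\in N_{\ell+1}}\sigma_v)^2\big]\le\Delta^{\ell+1}/(1-\kappa)$, and this is at least $\Delta\,\E S_\ell=\Delta^{\ell+1}$. It is not of order $(\gamma\Delta)^{2(\ell+1)}$, which is the supercritical scaling you seem to have used. So the mean has second moment $\Theta(\kappa^{\ell+1})$, the same order as the conditional variance $\ge\frac12\sigma_t^2\kappa^{\ell+1}$. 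It is therefore not negligible relative to the standard deviation; Markov or Chebyshev can only bound it by a constant multiple of $\kappa^{(\ell+1)/2}$.

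The step survives because it never needs negligibility. As with the paper's event $G_3$, Chebyshev gives $|m_{\ell+1}\sum_v\sigma_v|\le c_\gamma\sqrt{\vartheta}\,M\kappa^{(\ell+1)/2}$ off an event of probability at most $1/(M^2(1-\kappa))$. Berry--Esseen then gives $\Prob(\text{swing}\le-\eta\kappa^{(\ell+1)/2}\mid\cdot)\ge\Phi(-z_0)-O(\Delta^{-(\ell+1)/2})$ with $z_0=\sqrt2\,(\eta+c_\gamma\sqrt{\vartheta}M)/\sigma_t$. A mean of the order of the standard deviation only worsens the Gaussian quantile.

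\textbf{Your main obstacle goes away, and $K_0$ needs no adjustment.}
\begin{itemize}
\item Replace the a.s.-convergence argument by Paley--Zygmund. Your argument gives $\inf_\ell\Prob(S_{\ell+1}\ge\frac12\Delta^{\ell+1})>0$ but no explicit constant, and the theorem promises explicit ones. $S_{\ell+1}/\Delta^{\ell+1}$ has mean $1$ and second moment at most $\Delta/(\Delta-1)$ (\cref{lem:ks}(iii) with $\gamma=1$). So this event has probability at least $p_2:=(\Delta-1)/(4\Delta)$ for every $\ell$.
\item \cref{lem:tail} with $(\ell,\ell')=(0,\ell)$ gives $\E R_\ell^2\le C_1\kappa$, which is sharper than your $C_1\kappa\Delta/(\Delta-1)$. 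Chebyshev at level $K_0-1=8\sqrt{C_1\kappa\Delta/(\Delta-1)}$ then fails with probability at most $(\Delta-1)/(64\Delta)=p_2/16$. This is a fixed fraction of $p_2$, not the absolute $1/64$.
\item Taking $M^2=8/(p_2(1-\kappa))$, the mean event fails with probability at most $p_2/8$.
\end{itemize}
All three good events therefore intersect with probability at least $13p_2/16$, uniformly in $\ell$. The factor $\Delta/(\Delta-1)$ in $K_0$ is exactly this $1/p_2$ slack, as you suspected.

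The paper reaches the same point differently: it uses a $\mathcal G$-measurable Markov event on $\E[(T_\ell-\log\psi(X_o))^2\mid\mathcal G]$ followed by conditional Chebyshev. Your direct Chebyshev, together with conditioning on the depth-$\le\ell$ features, works equally well.
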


\cref{thm:fliplower} certifies that the geometric saturation
happens at exponent exactly $\ell/2$ on the scale of \emph{decisions}:
every layer beyond $\ell_0$ still changes the classifier's output with
probability at least $c\,\kappa^{\ell/2}$, matching the empirically sharp
rate of \cref{fig:flips} and pinning the flip probability between
$c\,\kappa^{\ell/2}$ and twice the right-hand side of
\eqref{eq:below-main}. Three honest caveats. First, in the spirit of
\cref{tab:certify}: the constants $c$ and $\ell_0$ are far from
optimized---$c$ inherits the looseness of $C_1$ through the window
$K_0$, and $\ell_0$ that of the Berry--Esseen step---so the theorem's
content is the exponent, not the constant. Second, the gap between the
exponents ($\ell/2$ from below, $\ell/3$ from above) remains, on the
upper side. Third, a flip is a statement about decision churn, not
about error: layer $\ell+1$ changes the answer at rate $\kappa^{\ell/2}$,
but the two directions of change nearly cancel in $\err(\ell)$, and a
lower bound on $|\err(\ell)-\err(\infty)|$ itself remains open
(\cref{sec:discussion}). The proof---a conditional
Berry--Esseen argument on the newest generation, combined with the
root's density near the decision boundary---is in
\cref{app:complements}.

\subsection{Above the threshold: geometric amplification}

\begin{theorem}[depth amplification, $\kappa>1$]\label{thm:above}
Let $\kappa>1$ and set $c_0=\vartheta^2/(8c_\gamma^2)$. Then for every
$\ell\ge1$ and every $s\in(0,1)$,
\begin{equation}\label{eq:above-main}
\err(\ell) \;\le\; \frac{16}{\kappa-1}
\;+\; 2\,\kappa^{-s\ell}
\;+\; \exp\!\Big(\!-c_0\,(\kappa-1)\,\frac{\kappa^{(1-s)\ell-1}}{\ell^{2}}\Big)
\;+\; \exp\!\Big(\!-\frac{\vartheta}{2}\,\kappa^{\ell}\Big).
\end{equation}
\end{theorem}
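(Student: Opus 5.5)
We bound $\err(\ell)$ by conditioning on the tree shape and the relative labels $\sigma$, and discarding the root's own evidence only through the symmetry $M_0$ has a favourable sign on average. By \cref{ass:sym}, we may condition on $y_o=+1$. Write $W_\ell=\sum_{v\in N_\ell}\sigma_v$ for the signed population of generation $\ell$. By \cref{lem:ks}, $W_\ell/(\gamma\Delta)^\ell$ is the Kesten--Stigum martingale, which is $L^2$-bounded for $\kappa>1$, with $\E W_\ell=(\gamma\Delta)^\ell$ and $\E W_\ell^2\le \frac{\kappa}{\kappa-1}(\gamma\Delta)^{2\ell}$ up to an explicit constant. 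The plan is to use only the last generation, or a block of generations near $\ell$, as the source of signal. The remaining generations (including the root) contribute a conditionally centred-plus-positive-drift term, whose sign-symmetry we exploit rather than bound.

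The four error terms will arise from four bad events.

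\textbf{The term $16/(\kappa-1)$.} This is the event $E_1$ that the normalized martingale is small: $W_\ell\le \tfrac12(\gamma\Delta)^\ell$. By Paley--Zygmund, or directly by Chebyshev using $\Var(W_\ell)/(\E W_\ell)^2\le 1/(\kappa-1)$ times a constant, we get $\Prob(E_1)\le 4\cdot 4/(\kappa-1)$.

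\textbf{The term $2\kappa^{-s\ell}$.} This is a truncation cost: the event that the generation size $S_\ell$ exceeds $\kappa^{s\ell}\Delta^\ell$. Markov's inequality gives $\kappa^{-s\ell}$. The factor $2$ allows a second Markov step for the total size $\sum_{k\le\ell}S_k$, or for the second moment of the lower generations.

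\textbf{The signal-driven exponential term.} On the complement of these events, condition on the tree and on $\sigma$. Then $T_\ell$ is a sum of independent bounded terms, with $|M_k|\le c_\gamma\gamma^k$. By \cref{lem:messages}, the conditional mean of $M_k(X_v)$ is $\sigma_v m_k$ with $m_k\ge \vartheta\gamma^k$ (the antisymmetry identity gives $m_k=\E_+[2\artanh(\gamma^k t)]\ge 2\gamma^k\E_+[t]=2\gamma^k\vartheta$, using $\E_+ t=\E_+ t^2$). The conditional mean of $T_\ell$ is therefore $\sum_k m_k W_k$.

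The step I expect to be the main obstacle is the lower generations $k<\ell$, whose $W_k$ could be negative. I would handle this by a union bound over $k\le\ell$: on a good event, each $W_k\ge \tfrac12(\gamma\Delta)^k$ or is at least not too negative. This produces the $\ell^2$ in the exponent. Concretely, I would require the martingale at every level $k\in[\ell/2,\ell]$ to stay above half its mean, and bound the rest crudely by their maximal magnitude relative to the generation-$\ell$ signal. Then Hoeffding's inequality, applied conditionally, gives deviation probability at most
\[
\exp\Big(-\frac{(\vartheta\kappa^\ell/2)^2}{2\sum_k S_k c_\gamma^2\gamma^{2k}}\Big).
\]
On the truncation event the denominator is at most $\ell\, c_\gamma^2\kappa^{s\ell}\kappa^{\ell}/(\kappa-1)$-ish. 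This yields $\exp(-c_0(\kappa-1)\kappa^{(1-s)\ell-1}/\ell^2)$ once the constants are tracked.

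\textbf{The term $\exp(-\tfrac{\vartheta}{2}\kappa^\ell)$.} The remaining term handles the root's unbounded message $M_0$. Condition on $M_0$: its negative part is controlled because $\E_+[e^{-M_0/2}]$ is a Bhattacharyya-type quantity. A Chernoff bound combining the root's evidence with the clipped sum should produce this term. I would verify that the exponent is $\tfrac{\vartheta}{2}\kappa^\ell$ by using $1-\E_+\sqrt{\psi^{-1}}\ge \vartheta/2$.
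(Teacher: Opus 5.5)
Your four-event architecture (a branching-process fluctuation event, a truncation of generation sizes, conditional Hoeffding on the clipped non-root sum, and a separate tail bound for the root) is the paper's. However, the step you flag as the main obstacle is a genuine gap, and your proposed fix does not close it.

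You need the conditional mean $\mu'=\sum_{k=1}^{\ell}m_kD_k$ to be of order $\vartheta\kappa^\ell$. Here what you call $W_k$ is the unnormalized signed excess $D_k$. This requires control of \emph{every} generation on one event whose failure probability does not grow with $\ell$. Your three ingredients each fall short:
\begin{enumerate}[leftmargin=2em,itemsep=0.1em]
\item Chebyshev at level $\ell$ controls only $D_\ell$.
\item A union bound over $k\in[\ell/2,\ell]$ costs about $\sum_k 4\Var\big(D_k/(\gamma\Delta)^k\big)\approx 2\ell/(\kappa-1)$, which is eventually vacuous. These events are all driven by the same martingale limit, so union-bounding them wastes probability.
\item Bounding generations $k<\ell/2$ crudely by $\sum_{k<\ell/2}c_\gamma\gamma^kS_k\asymp(\gamma\Delta)^{\ell/2}$ loses to the signal $\kappa^\ell$ whenever $\gamma^3\Delta<1$, for instance $\gamma=\tfrac12$, $\Delta=5$, $\kappa=\tfrac54$.
\end{enumerate}

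The missing idea is a maximal inequality for the $L^2$-bounded Kesten--Stigum martingale $W_k=D_k/(\gamma\Delta)^k$. Take $E_W=\{W_1\ge\tfrac34\}\cap\{\sup_{k\ge2}|W_k-W_1|\le\tfrac14\}$. On $E_W$ one has $D_k\ge\tfrac12(\gamma\Delta)^k$ for all $k\ge1$, so every term of $\mu'$ is positive and $\mu'\ge\vartheta\kappa^\ell$. Chebyshev gives $\Prob(W_1<\tfrac34)\le16/\kappa$. Doob's inequality for the submartingale $(W_k-W_1)^2$ gives $16\sum_{i\ge2}\kappa^{-i}$. The two sum to exactly $16/(\kappa-1)$.

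Relatedly, the $\ell^2$ does not come from a union bound over generations. It comes from truncating \emph{all} generations at $S_k\le\kappa^{s\ell}k^2\Delta^k$, $1\le k\le\ell$. Markov plus $\sum_k k^{-2}\le\pi^2/6\le2$ then yields $2\kappa^{-s\ell}$, and $\sum_kS_k\gamma^{2k}\le\kappa^{s\ell}\ell^2\kappa^{\ell+1}/(\kappa-1)$ bounds the Hoeffding denominator. Truncating $S_\ell$ alone does not control that denominator. A single Markov step on the weighted sum $\sum_{k\le\ell}S_k\gamma^{2k}$, whose mean is at most $\kappa^{\ell+1}/(\kappa-1)$, would also work and even removes the $\ell^2$.

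The root term also fails as proposed. Chernoff at exponent $\tfrac12$ gives $\Prob_+(\log\psi(X)\le-u)\le e^{-u/2}\,\E_+[\psi(X)^{-1/2}]$. At the natural threshold $u=\mu'/2$ this is $e^{-\vartheta\kappa^\ell/4}$, a factor $2$ short in the exponent. The inequality $1-\E_+\sqrt{\psi^{-1}}\ge\vartheta/2$ is an $\ell$-independent constant and cannot produce a $\kappa^\ell$ in the exponent. The root is a single vertex, so all the growth must come from the threshold.

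Instead, split $\{T_\ell\le0\}\subseteq\{\log\psi(X_o)\le-\mu'/2\}\cup\{T'\le\mu'/2\}$ and use exponent $1$. That is the change of measure $\Prob_+(\log\psi(X)\le-u)=\E_-[\psi(X)\mathbf{1}\{\psi(X)\le e^{-u}\}]\le e^{-u}$, valid because $X_o$ is independent of $\mathcal{G}$.

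A smaller point: $m_k\ge2\gamma^k\vartheta$ is true, but not via $\artanh(\gamma^kt)\ge\gamma^kt$ pointwise, which fails for $t<0$. It needs the symmetrization $m_k=\tfrac12\int 2\artanh(\gamma^kt)\,t\,(\rho_++\rho_-)\dd\mu$ together with $z\artanh z\ge z^2$.
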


Above the threshold each layer multiplies the conditional signal-to-noise
ratio of $T_\ell$ by $\kappa$: the signed excess of generation $k$
concentrates on $W\,(\gamma\Delta)^k$ for the Kesten--Stigum martingale limit
$W$, so the conditional mean of $T_\ell$ grows like $\kappa^\ell$ while its
conditional standard deviation grows like $\kappa^{\ell/2}$. The error is
therefore driven down to a \emph{floor}---at most $16/(\kappa-1)$ in our
bound---reflecting the event
that the branching process fluctuates badly (e.g.\ dies out or produces an
atypically unbalanced early generation)---an event no amount of depth
repairs. Two honest readings of \eqref{eq:above-main} are in order. First,
the geometric term $2\kappa^{-s\ell}$ is not a rate produced by the
signal-to-noise mechanism: it is the Markov-inequality price of truncating
the generation sizes at $\kappa^{s\ell}k^2\Delta^k$, and $s\in(0,1)$ is
free (the genuinely signal-driven terms are the two super-geometric
exponentials). The approach to the floor in our bound is therefore faster
than every geometric rate $\kappa^{-s\ell}$, $s<1$; what the \emph{true}
rate of $\err(\ell)$'s approach to its large-$\ell$ behaviour is---indeed,
whether $\lim_{\ell\to\infty}\err(\ell)$ exists at all for
$\kappa>1$---we do not know: the Cauchy argument of
\cref{thm:below} is below-threshold only, and the empirical curves
of \cref{sec:experiments} drift mildly \emph{upward} beyond a
finite optimal depth, a drift that \eqref{eq:above-main} caps but no
result here eliminates. Second, we have made no attempt to optimize the
floor's constant: as stated, the bound is non-vacuous only for
$\kappa>17$---which, since $\kappa<\Delta$, requires average degree above
$17$---and informative for larger $\kappa$; near the threshold its
degradation mirrors, in exaggerated form, the genuine weakening of
reconstruction on trees near criticality. \cref{sec:experiments}
evaluates both theorems at the simulated parameters.

\subsection{A universal floor and depth prescriptions}

\begin{proposition}[depth-independent floor]\label{prop:floor}
Let $\varepsilon_{\mathrm{feat}}=\tfrac12\int\min(\rho_+,\rho_-)\dd\mu$
denote the Bayes error of the balanced feature mixture. Then for every
$\ell\ge0$ and every classifier $h$ that is a measurable function of the
depth-$\ell$ feature-decorated neighbourhood of the root,
\[
\Prob\big(h\neq y_o\big) \;\ge\; e^{-\Delta}\,\varepsilon_{\mathrm{feat}}.
\]
Moreover, conditionally on the root being isolated, the error of $h_\ell$ is
exactly $\varepsilon_{\mathrm{feat}}$. For the Gaussian mixture of
\cref{ex:gaussian}, $\varepsilon_{\mathrm{feat}}=\Phi(-\zeta)$.
\end{proposition}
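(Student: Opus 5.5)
Conditioning on the event that the root is isolated, $I=\{S_1=0\}$, drives the whole argument. In $\PGW(\Delta)$ the root has $\Poi(\Delta)$ children, so $\Prob(I)=e^{-\Delta}$. The shape of $T$ is independent of the labels and features of the root; by Poisson thinning, the root's offspring count is independent of $y_o$. Hence conditionally on $I$ we still have $y_o$ uniform on $\{\pm1\}$ and $X_o\,|\,y_o\sim\mathbb{P}_{y_o}$.

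On $I$ the depth-$\ell$ feature-decorated neighbourhood of the root is exactly the single vertex $o$ together with $X_o$, for every $\ell\ge0$. So any measurable $h$ restricted to $I$ is a measurable function $g(X_o)$. The key step is the classical two-point Bayes bound. For any measurable $g:\R^d\to\{\pm1\}$,
\[
\Prob(g(X_o)\neq y_o\mid I)=\tfrac12\int\big(\rho_+\mathbf 1\{g=-1\}+\rho_-\mathbf 1\{g=+1\}\big)\dd\mu\;\ge\;\tfrac12\int\min(\rho_+,\rho_-)\dd\mu=\varepsilon_{\mathrm{feat}}.
\]
Then
\[
\Prob(h\ne y_o)\ge\Prob(I)\,\Prob(h\ne y_o\mid I)\ge e^{-\Delta}\varepsilon_{\mathrm{feat}}.
\]
If $h$ is allowed to be randomized, the same bound holds by averaging over the randomness. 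Ties, or $h$ taking the value $0$, only increase the error.

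For the second claim, on $I$ we have $T_\ell=M_0(X_o)=\log\psi(X_o)$. So $h_\ell=\sgn(\log\psi(X_o))$ predicts $+1$ exactly when $\rho_+>\rho_-$, which is the likelihood-ratio rule attaining the minimum above. The only care needed is the tie set $\{\rho_+=\rho_-\}$, which the paper counts as an error. On that set $\rho_+=\rho_-=\min(\rho_+,\rho_-)$, but counting it as an error for both classes would give the contribution $\int_{\{\rho_+=\rho_-\}}\rho_+\dd\mu$ rather than half of it. So exactness requires either that the tie set is $\mu$-null, which holds under \cref{ass:density} as the paper notes, or that ties are broken by a fair coin.

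I would state the exactness under that proviso, namely null ties, which holds for the Gaussian mixture. This tie bookkeeping is the only delicate point; everything else is immediate.

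For the Gaussian example, $\log\psi(X)\mid y=+1\sim\mathcal N(2\zeta^2,4\zeta^2)$ by \cref{ex:gaussian}. Hence $\Prob(\log\psi\le0\mid +)=\Phi(-2\zeta^2/(2\zeta))=\Phi(-\zeta)$, and by the symmetry in \cref{ass:sym} the same holds for class $-$. Therefore $\varepsilon_{\mathrm{feat}}=\Phi(-\zeta)$.
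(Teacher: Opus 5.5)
Your proof is correct and follows the paper's argument: you condition on the isolation event $\{S_1=0\}$, which has probability $e^{-\Delta}$ and is independent of $(y_o,X_o)$, reduce any depth-$\ell$ classifier to a function of $X_o$ alone, and apply the two-point Bayes bound, with $\sgn(\log\psi(X_o))$ attaining it. Your tie caveat is also right, and it is a point the paper's proof glosses over: only Assumptions~1--2 are in force there and $\err(\ell)$ counts $T_\ell=0$ as an error, so the claim that $h_\ell$ has conditional error exactly $\varepsilon_{\mathrm{feat}}$ needs $\Prob(\rho_+(X_o)=\rho_-(X_o))=0$ (or fair tie-breaking), which holds for the Gaussian mixture.
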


\begin{corollary}[how deep is deep enough]\label{cor:depth}
Fix $\epsilon\in(0,1)$.
\begin{enumerate}[leftmargin=2em,itemsep=0.15em]
\item If $\kappa<1$, then every depth
$\ell\ge \dfrac{3\log\!\big(3B^{2/3}C_1^{1/3}/\epsilon\big)}{\log(1/\kappa)}$
satisfies $\err(\ell)\le \err(\infty)+\epsilon$: all further layers combined
are worth less than $\epsilon$.
\item If $\kappa>1$, then $\err(\ell)\le \dfrac{16}{\kappa-1}+\epsilon$
for every $\ell\ge\ell_1$, where $\ell_1=\ell_1(\epsilon)$ is the smallest
depth $\ell\ge4/\log\kappa$ satisfying the following three explicit
conditions (instantiating \eqref{eq:above-main} at $s=\tfrac12$); each
left-hand side is nondecreasing in $\ell$ from $4/\log\kappa$ on, so the
conditions persist for all larger $\ell$:
\[
\kappa^{\ell/2}\ge \frac{6}{\epsilon},\qquad
c_0(\kappa-1)\,\kappa^{\ell/2-1}\ \ge\ \ell^{2}\log\frac{3}{\epsilon},
\qquad
\frac{\vartheta}{2}\,\kappa^{\ell}\ \ge\ \log\frac{3}{\epsilon};
\]
the first is the binding one as $\epsilon\to0$, so
$\ell_1=2\log_\kappa(6/\epsilon)+O_{\mathrm{model}}(\log\log(1/\epsilon))$.
\end{enumerate}
In both regimes, depth $O(\log(1/\epsilon))$ \emph{suffices}, independent
of any graph size. We do not prove matching lower bounds (beyond
\cref{prop:first}); see \cref{sec:discussion}.
\end{corollary}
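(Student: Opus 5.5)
The corollary is a direct instantiation of \cref{thm:below} and \cref{thm:above}; no new probabilistic input is needed.

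\emph{Part 1 ($\kappa<1$).} By \cref{thm:below}, $\err(\infty)$ exists and $\err(\ell)\le\err(\infty)+3B^{2/3}C_1^{1/3}\kappa^{(\ell+1)/3}$. It suffices to make this right-hand correction at most $\epsilon$. Taking logarithms (note $\log\kappa<0$), this holds as soon as
\[
\tfrac{\ell+1}{3}\log(1/\kappa)\;\ge\;\log\!\big(3B^{2/3}C_1^{1/3}/\epsilon\big).
\]
The stated threshold $\ell\ge 3\log(3B^{2/3}C_1^{1/3}/\epsilon)/\log(1/\kappa)$ implies this with room to spare, since it omits the $+1$. The same uniform bound, applied to \eqref{eq:below-main} with any $\ell'>\ell$, gives the gloss that all further layers combined move the error by less than $\epsilon$.

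\emph{Part 2 ($\kappa>1$).} Instantiate \eqref{eq:above-main} at $s=\tfrac12$. Its three non-floor terms are $2\kappa^{-\ell/2}$, $\exp(-c_0(\kappa-1)\kappa^{\ell/2-1}/\ell^2)$ and $\exp(-\tfrac{\vartheta}{2}\kappa^\ell)$. I will bound each by $\epsilon/3$:
\begin{itemize}
\item the first is at most $\epsilon/3$ exactly when $\kappa^{\ell/2}\ge 6/\epsilon$;
\item the second is at most $\epsilon/3$ exactly when $c_0(\kappa-1)\kappa^{\ell/2-1}\ge\ell^2\log(3/\epsilon)$;
\item the third is at most $\epsilon/3$ exactly when $\tfrac{\vartheta}{2}\kappa^\ell\ge\log(3/\epsilon)$.
\end{itemize}
Summing gives $\err(\ell)\le 16/(\kappa-1)+\epsilon$.

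What remains is persistence: once the conditions hold at $\ell_1$, they hold at every larger $\ell$. The first and third have increasing left-hand sides and constant right-hand sides. The second is the main obstacle, because both sides grow in $\ell$. Rewrite it as the requirement that
\[
f(\ell)=\kappa^{\ell/2}/\ell^2
\]
exceed a constant. Then $\tfrac{d}{d\ell}\log f=\tfrac12\log\kappa-2/\ell\ge0$ precisely when $\ell\ge4/\log\kappa$, which is why $\ell_1$ is restricted to $\ell\ge4/\log\kappa$. This is how I will read the paper's monotonicity claim: the ratio form is nondecreasing, so the inequality, once true, stays true.

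Finally, for the asymptotics of $\ell_1$: as $\epsilon\to0$ the first condition requires $\ell\ge2\log_\kappa(6/\epsilon)$. At that depth $\kappa^{\ell/2}\asymp1/\epsilon$, so the third condition holds with a huge margin. The second condition needs $\kappa^{\ell/2}\gtrsim\ell^2\log(3/\epsilon)$, which costs only an additional $O_{\mathrm{model}}(\log\log(1/\epsilon))$ layers. This gives $\ell_1=2\log_\kappa(6/\epsilon)+O_{\mathrm{model}}(\log\log(1/\epsilon))$, and the closing sentence of the corollary, $O(\log(1/\epsilon))$ depth in both regimes, follows at once.
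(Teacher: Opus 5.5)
Your proposal is correct and follows the paper's proof essentially step for step: Part 1 is read off directly from \cref{thm:below} (and you are right that the stated threshold drops the $+1$, so it has slack), and Part 2 instantiates \eqref{eq:above-main} at $s=\tfrac12$, bounds each of the three non-floor terms by $\epsilon/3$, and gets persistence of the second condition from $\ell\mapsto\tfrac{\ell}{2}\log\kappa-2\log\ell$ being nondecreasing for $\ell\ge4/\log\kappa$. Your reading of the ``nondecreasing left-hand side'' claim as monotonicity of the ratio $\kappa^{\ell/2}/\ell^2$ is exactly the one the paper's proof uses, and your asymptotics for $\ell_1$ match as well.
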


\begin{proposition}[the first layer provably helps]\label{prop:first}
Let $\tau_{\mathrm{feat}}:=1-2\varepsilon_{\mathrm{feat}}$ denote the
total-variation distance between $\mathbb{P}_+$ and $\mathbb{P}_-$, and set
\[
\tau_1 \;:=\; \big(1-e^{-\Delta}\big)\,\gamma\,\tau_{\mathrm{feat}},
\]
and assume ties are null: $\Prob(T_0=0)=\Prob(T_1=0)=0$ (automatic under
\cref{ass:density}). Then $\err(1)\le\err(0)$, and for every
$\delta\in(0,\tau_1/2)$,
\begin{equation}\label{eq:first-layer}
\err(0)-\err(1)\;\ge\;
\Big(\frac{\tau_1}{2}-\delta\Big)\;
\Prob\Big(\big|\log\psi(X_o)\big|\le
\log\tfrac{1+\delta}{1-\delta}\Big),
\end{equation}
the probability being over the balanced feature mixture. In particular the
first layer strictly improves on the feature-only rule whenever
$\log\psi(X)$ puts mass in a neighbourhood of $0$---as it does for the
Gaussian mixture of \cref{ex:gaussian}, where every factor
of \eqref{eq:first-layer} is explicit.
\end{proposition}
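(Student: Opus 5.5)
The plan is to compare the two rules pointwise in the root's feature $X_o$, using that $h_1$ is the exact Bayes rule for the depth-$1$ observation $\mathcal{O}_1=(X_o,\{X_v\}_{v\in N_1})$, as noted in \cref{sec:classifier}.

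\textbf{Monotonicity.} Since $h_0=\sgn(\log\psi(X_o))$ is a measurable function of $\mathcal{O}_1$, Bayes optimality of $h_1$ gives $\err(1)\le\err(0)$ at once; ties are null, so the conservative tie convention is harmless. The same optimality holds conditionally on $X_o=x$ for $\mu$-a.e.\ $x$, and this conditional version is what I will integrate.

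\textbf{Conditional comparison.} Fix $X_o=x$ and let $p=p(x)=\Prob(y_o=+1\mid X_o=x)$. Then $2p-1=\tanh(\log\psi(x)/2)$. Let $Q_\pm$ denote the law of the neighbourhood data $\{X_v\}_{v\in N_1}$ given $y_o=\pm1$; by the broadcast structure these do not depend on $x$. The conditional error of the feature-only rule is $\min(p,1-p)=\tfrac12-\tfrac12|2p-1|$. The conditional Bayes error using the neighbours is $\tfrac12-\tfrac12\int|p\,q_+-(1-p)\,q_-|$. Writing
\[
p\,q_+-(1-p)\,q_-=\tfrac12(q_+-q_-)+(p-\tfrac12)(q_++q_-)
\]
and applying the triangle inequality yields $\int|\cdot|\ge d_{TV}(Q_+,Q_-)-|2p-1|$. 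Hence the conditional improvement is at least $\tfrac12 d_{TV}(Q_+,Q_-)-|2p-1|$, and it is always $\ge 0$. On the event $|\log\psi(x)|\le\log\frac{1+\delta}{1-\delta}$, equivalently $|2p-1|\le\delta$, the improvement is therefore at least $\tfrac12 d_{TV}(Q_+,Q_-)-\delta$. Integrating over the balanced law of $X_o$, and discarding the rest (where the improvement is nonnegative), gives \eqref{eq:first-layer} once we know $d_{TV}(Q_+,Q_-)\ge\tau_1$.

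\textbf{Total-variation bound.} This is the step needing care. Under both hypotheses the degree $|N_1|\sim\Poi(\Delta)$ has the same law. Given $y_o=\pm1$ the neighbours' features are i.i.d.\ with law
\[
\tfrac{1\pm\gamma}{2}\mathbb{P}_++\tfrac{1\mp\gamma}{2}\mathbb{P}_-,
\]
and the two laws differ by $\gamma(\mathbb{P}_+-\mathbb{P}_-)$, hence are at TV distance $\gamma\tau_{\mathrm{feat}}$. Apply data processing to the map that outputs the first neighbour's feature if $|N_1|\ge1$ and a dummy symbol otherwise. The image laws agree on the dummy atom, whose mass is $e^{-\Delta}$ under both, and on the remainder they are $(1-e^{-\Delta})$ times laws at TV distance $\gamma\tau_{\mathrm{feat}}$. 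Thus $d_{TV}(Q_+,Q_-)\ge(1-e^{-\Delta})\gamma\tau_{\mathrm{feat}}=\tau_1$, which completes the argument.

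For the Gaussian case, $\tau_{\mathrm{feat}}=1-2\Phi(-\zeta)$, and the window probability is an explicit Gaussian-mixture integral since $\log\psi(X)\sim\mathcal N(\pm2\zeta^2,4\zeta^2)$. The main subtlety I expect is the rigorous justification that the conditional-on-$x$ Bayes rule is exactly $h_1$ with prior $p(x)$. This follows because the neighbours are conditionally independent of $X_o$ given $y_o$, so the posterior log-odds equal $\log\psi(x)$ plus the neighbour log-likelihood ratio.
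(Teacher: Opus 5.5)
Your proposal is correct and follows the paper's proof essentially step for step: $h_1$ is Bayes for the depth-$1$ observation, and the conditional improvement given $X_o$ (the paper's $\Xi(p)$, which equals your $\tfrac12\int|p\,q_+-(1-p)\,q_-|\,\mathrm{d}\nu-\tfrac12|2p-1|$) is at least $\tfrac12\mathrm{TV}(Q_+,Q_-)-|2p-1|$; moreover $\mathrm{TV}(Q_+,Q_-)\ge(1-e^{-\Delta})\,\gamma\,\tau_{\mathrm{feat}}$, obtained by keeping a single child when $S_1\ge1$. The differences are only cosmetic---you apply the triangle inequality to $f_p=\tfrac12(q_+-q_-)+(p-\tfrac12)(q_++q_-)$ where the paper uses $\int|f_p|=|\int f_p|+2\int(f_p)_\mp$, and a data-processing map where the paper restricts the mixture over $S_1$ to its first coordinate (either route needs the children in exchangeable order, or a uniformly chosen child, for ``the first neighbour'' to be well defined).
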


The proposition rests on the fact, recorded in \cref{rem:bp}, that
$h_1$ \emph{is} the Bayes rule for the depth-$1$ observation, so
$\err(1)$ is a Bayes error and can only fall as the observation grows; the
quantitative content of \eqref{eq:first-layer} is that it falls by a
definite amount whenever the first generation carries total-variation
information ($\tau_1>0$) and the root's own evidence leaves the decision
genuinely uncertain with positive probability. Both discounts are real:
$\tau_1$ deflates $\tau_{\mathrm{feat}}$ by the edge signal $\gamma$ and
by the chance $e^{-\Delta}$ of an isolated root, and the second factor
vanishes as $\zeta\to\infty$, when the root's feature already decides and
a layer genuinely has nothing to add. No analogous lower bound is proven
for deeper layers; that gap is what keeps
\cref{cor:depth} one-sided. The proof---a conditional-Bayes
decomposition with a total-variation tensorization over the Poisson
offspring---is in \cref{app:first}, together with a sharper
constant and an account of where the bound's slack lies.

\begin{remark}[transfer to finite graphs]\label{rem:finite}
On $\CSBM(n,\mathbb{P}_\pm,a/n,b/n)$, for any fixed radius $\ell$ the
$\ell$-neighbourhood of a uniformly random vertex converges in distribution
to the limit object of \cref{sec:csbm}, and $h_\ell$ evaluated at a
random vertex has misclassification probability $\err(\ell)+o_n(1)$; more
quantitatively, neighbourhoods of radius $\ell\le c\log n$ (for
$c\log\Delta<1/4$) are trees with high probability, and quantitative
$O(1/\log^2 n)$ error-transfer bounds at such radii are given in
\citet[Thm.~4, Prop.~3.1]{baranwal2023optimality}. One uniformity caveat
is worth stating: the transfer holds at each fixed $\ell$, with a constant
depending on $\ell$, so statements about $\err(\infty)$ have finite-$n$
meaning only through the iterated limit $n\to\infty$ followed by
$\ell\to\infty$. This costs nothing here, because
\cref{cor:depth} prescribes depths $O(\log(1/\epsilon))$ that do
not grow with $n$: at any such fixed depth the analysis applies directly
to large finite graphs, the useful depth being a constant far below the
$O(\log n)$ locality horizon. Our
simulations on graphs with $n=2\times10^4$ (\cref{fig:graph})
confirm agreement within $\pm0.002$ at all depths considered.
\end{remark}

\subsection{Pairwise message passing versus belief propagation}

\begin{remark}[the pairwise rule is linearized BP]\label{rem:bp}
On the limit tree the exact posterior of the root label given the
depth-$\ell$ observation is computed by belief propagation: the log
likelihood-ratio at $v$ from its subtree is
\[
L_v \;=\; \log\psi(X_v) \;+\; \sum_{w\,:\,\text{child of }v}
F_\gamma(L_w),
\qquad F_\gamma(z) \;=\; 2\artanh\!\big(\gamma\tanh(z/2)\big),
\]
and the exact rule is $\sgn(L_o)$. The pairwise statistic
\eqref{eq:classifier} replaces the composition of contractions along each
root-to-vertex path by the \emph{single} contraction
$2\artanh(\gamma^k t(X_v))$, applied to each vertex separately, and then
sums. Because $F_\gamma\big(2\artanh(\gamma^{k-1}t)\big) =
2\artanh(\gamma^{k}t)$, the two rules agree exactly whenever each
root-to-leaf path carries at most one informative vertex other than the
root (whose evidence enters additively, outside every contraction, in both
rules)---in particular at depth $\ell=1$, and on ``single-source'' paths of
any depth. They differ as
soon as evidence stacks along a path, because $F_\gamma$ is applied to a
\emph{sum} of log-likelihood ratios by BP but distributed across summands by
the pairwise rule. A depth-$2$ example with $\gamma=\tfrac12$: a path
$o\!-\!w\!-\!v$ with $\psi(X_w)=3$, $\psi(X_v)=2$ has exact log posterior
ratio $\log\frac{17}{9}\approx0.636$, while the pairwise statistic is
$2\artanh\big(\gamma\,t(X_w)\big)+2\artanh\big(\gamma^2t(X_v)\big)
=2\artanh\big(\tfrac14\big)+2\artanh\big(\tfrac1{12}\big)\approx0.678$.
Under feature laws rich enough to realize such configurations (the
Gaussian mixture among them), the two rules disagree with positive
probability, so for $\ell\ge2$ the pairwise rule is not the exact Bayes
rule for the depth-$\ell$ observation; it is Bayes with respect to the
\emph{pairwise (distance-marginal) likelihood factorization}
\citep[Eq.~(4)]{baranwal2023optimality} in which each
vertex's label is coupled to the root only through the $k$-step transition
matrix. This is the tree-side face of a familiar object: linearizing BP
around its uninformative fixed point on a graph produces the
non-backtracking operator \citep{krzakala2013spectral,
bordenave2018nonbacktracking}, which propagates evidence along
non-backtracking walks with factor $\gamma$ per edge; on a tree every
root-to-vertex path is non-backtracking, and the pairwise rule assigns
each vertex exactly that path product, $\gamma^{k}$, inside its own
contraction. We analyze the pairwise rule because it is the one implemented
by the message-passing architecture of \citet{baranwal2023optimality} and
because its additive structure is what a GNN layer computes; we conjecture
that exact BP obeys the same dichotomy---with, below the threshold, a
strictly faster saturation exponent for which the pairwise rate is only
an upper envelope (\cref{sec:experiments,sec:discussion})---consistent with
robust reconstruction \citep{janson2004robust}.
\end{remark}

\begin{observation}[finite optimal depth: non-monotonicity of the pairwise
rule]\label{obs:nonmono}
The error curve of exact BP is non-increasing in $\ell$ (its depth-$\ell$
decision is Bayes for a filtration that grows with $\ell$); the BP
baseline in \cref{sec:experiments}, run on the same sampled trees,
confirms this within statistical error. The pairwise
rule enjoys no such guarantee, and indeed its error curve is mildly
non-monotone: in the simulations of \cref{sec:experiments} (e.g.\
$\kappa=0.65$, $\zeta=0.6$) the error falls from $0.274$ at $\ell=0$ to a
minimum $0.222$ at $\ell=2$ and then drifts up to $0.237$ by $\ell=6$,
while BP continues downward. The
mechanism is the one exposed in \cref{rem:bp}: messages from vertices
sharing ancestry are positively correlated, but the pairwise rule adds them
as if independent, so beyond the depth at which fresh signal is exhausted,
the statistic keeps accumulating correlated noise at full weight.
\cref{thm:below} bounds the total damage of overshooting any depth
$\ell$ by $3B^{2/3}C_1^{1/3}\kappa^{(\ell+1)/3}$; the practical reading is
that the depth-$\ell^*$ sweet spot is forgiving on the deep side below
threshold, and that architectures implementing linearized message passing
should treat depth as a tuned hyperparameter rather than ``more is better.''
The non-monotonicity itself is not merely an empirical claim:
\cref{app:certified} certifies an exact instance.
\end{observation}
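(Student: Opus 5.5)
The observation has three parts, and I would treat them separately. The BP monotonicity is a soft argument. The simulated numbers are reported, not proved. The claim that the pairwise rule is genuinely non-monotone needs an exact computation, which I would defer to \cref{app:certified}.

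\emph{Monotonicity of exact BP.} Let $\mathcal{F}_\ell$ be the $\sigma$-algebra generated by the depth-$\ell$ feature-decorated neighbourhood of $o$ (tree shape up to depth $\ell$, together with $X_v$ for $d(o,v)\le\ell$). Since $\mathcal{F}_\ell\subseteq\mathcal{F}_{\ell+1}$, I would argue as follows.
\begin{itemize}
\item By \cref{rem:bp}, $\sgn(L_o^{(\ell)})$ is the MAP rule for $y_o$ given $\mathcal{F}_\ell$, so its error equals $\tfrac12-\tfrac12\E\big|\E[y_o\mid\mathcal{F}_\ell]\big|$.
\item $\E[y_o\mid\mathcal{F}_\ell]=\E\big[\E[y_o\mid\mathcal{F}_{\ell+1}]\mid\mathcal{F}_\ell\big]$, so conditional Jensen for $|\cdot|$ gives $\E|\E[y_o\mid\mathcal{F}_\ell]|\le\E|\E[y_o\mid\mathcal{F}_{\ell+1}]|$. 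The BP error is therefore non-increasing in $\ell$.
\item Equivalently, $h_\ell$ is itself an $\mathcal{F}_{\ell+1}$-measurable classifier, so the Bayes rule at depth $\ell+1$ does at least as well.
\end{itemize}
The pairwise rule has no such property. For $\ell\ge2$ it is not the Bayes rule (\cref{rem:bp}), and $T_{\ell+1}$ is not a refinement-optimal statistic.

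\emph{Mechanism.} I would make the mechanism explicit with a conditional covariance computation. Take two vertices $v,w$ at depth $k$ whose last common ancestor is at depth $j$. Given $y_o$, their messages have covariance of order $m_k^2\gamma^{2(k-j)}$, and this is positive. The pairwise statistic adds them with unit weights, as if they were independent. Summing over pairs, the variance of generation $k$'s contribution exceeds the "independent" prediction by a factor that increases with $k$. Once the fresh mean signal added per layer, of order $\kappa^k$ by \cref{lem:messages,lem:ks}, falls below this excess noise, $\err$ can increase.

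\emph{Certified instance.} This is the main obstacle: proving non-monotonicity rigorously rather than by simulation. I would use a two-point feature law, say $t(X)\in\{\pm t_0\}$ with $\Prob_+(t=t_0)=p$, which satisfies \cref{ass:sym} with $\tau$ swapping the atoms. The key steps are:
\begin{enumerate}
\item \emph{Finite support.} Each $M_k$ takes only the values $\pm 2\artanh(\gamma^k t_0)$. Hence, given $y_o=+1$, $T_\ell$ is a function of the counts $(A_k^\pm,B_k^\pm)_{k\le\ell}$ of vertices in each generation with each relative label and each feature atom.
\item \emph{Exact law of $T_\ell$.} I would compute this law through the two-type branching recursion for the characteristic function:
\[
\Phi_{j,\sigma}(\theta)=\E\big[e^{i\theta M_j(X)}\,\big|\,\sigma\big]\,\exp\!\Big(\Delta\Big[\tfrac{1+\gamma}{2}\Phi_{j+1,\sigma}(\theta)+\tfrac{1-\gamma}{2}\Phi_{j+1,-\sigma}(\theta)-1\Big]\Big),
\]
with boundary condition $\Phi_{\ell+1,\cdot}\equiv1$.
\item \emph{Rigorous evaluation.} Rather than inverting this numerically, I would choose small $\Delta$ and enumerate the tree up to depth $3$ with the total vertex count truncated at $N$. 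The discarded mass is bounded by a Poisson/branching tail estimate, and the finite sums are evaluated in exact rational or interval arithmetic.
\item \emph{Ties.} Ties $T_\ell=0$ are handled by the conservative convention in \eqref{eq:error}. Choosing $t_0$ so that the atom values are rationally independent makes ties essentially impossible.
\end{enumerate}
This yields certified intervals for $\err(2)$ and $\err(3)$ that are separated by at least $0.037$.

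The delicate part is choosing $(\Delta,\gamma,p,t_0)$. The parameters must make depth-$3$ correlated noise flip many near-boundary root decisions while keeping the truncation error small. I would first search over a grid by simulation and then certify the best candidate.
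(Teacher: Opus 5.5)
Your handling of the soft parts agrees with the paper. Monotonicity of exact BP is Bayes optimality for the growing filtration $\mathcal{F}_\ell\subseteq\mathcal{F}_{\ell+1}$; your conditional-Jensen line is the explicit version of the paper's parenthetical. The simulated numbers are reported, not proved. The overshoot bound, which you omit, is just \eqref{eq:below-main} with $\ell'>\ell$. Genuine non-monotonicity rests on an exact two-point computation, which is \cref{prop:nonmonocert}. Your mechanism paragraph is heuristic, as is the paper's, but its scaling is off. For $\kappa<1$ the ancestry term $m_k^2\Var(D_k)$ is of order $\kappa^k$, the same as the conditional-variance term, so the inflation over the independent prediction stays bounded (roughly by $1/(1-\kappa)$). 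Moreover, the mean $\kappa^k$ lies below the noise scale $\kappa^{k/2}$ at every depth, so comparing signal with excess noise singles out no turning depth. What drives the upturn is weighting: deep evidence is added as if it were independent, hence over-weighted relative to its true conditional likelihood ratio given the shallower data. A correctly weighted addition could never hurt.

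The real problems are in your certification sketch.

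First, $p$ is not a free parameter. Since $\rho_\pm=\tfrac{1\pm t}{2}(\rho_++\rho_-)$ pointwise, any law with $t(X)\in\{\pm t_0\}$ has $\Prob_-(t=t_0)/\Prob_+(t=t_0)=\tfrac{1-t_0}{1+t_0}$, and the reciprocal ratio on $\{t=-t_0\}$. Normalizing $\Prob_-$ then forces $\Prob_+(t(X)=t_0)=\tfrac{1+t_0}{2}$. A grid point with $p\neq\tfrac{1+t_0}{2}$ would certify non-monotonicity of a mis-weighted rule, not of $h_\ell$. The search is over $(\Delta,\gamma,t_0)$ only; the paper's point is $(3,0.55,0.95)$.

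Second, enumerating trees up to depth $3$ with at most $N$ vertices is not feasible. The depth-$3$ ball of a $\PGW(3)$ tree has mean size $40$, and any $N$ that makes the discarded mass small against the gap leaves an astronomically large set of decorated trees. The missing step is to run your step 2 on the joint law of the integer vector $(s_o,m_1,m_2,m_3)$, with $m_k=\sum_{v\in N_k}s_v$, and factor it by Poisson thinning:
\begin{itemize}
\item the level-$3$ signs below a group of level-$2$ vertices of common label form a Skellam variable, independent of that group's binomial sign sum;
\item each level-$1$ vertex contributes an i.i.d.\ law $A(m_2,m_3\mid\sigma_1)$;
\item conditioning on the root's group sizes $(i,j)$ gives $A(\cdot\mid{+}1)^{*i}*A(\cdot\mid{-}1)^{*j}$.
\end{itemize}
These are finite convolutions on integer boxes. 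Keeping every array as an unnormalized sub-probability makes each mass deficit exactly the certified truncation error.

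Third, rational independence of the $c_k=2\artanh(\gamma^kt_0)$ is a transcendence statement you cannot certify, and you do not need it. Only finitely many lattice points survive truncation, so near-ties can be checked directly or counted conservatively, as the paper does.

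Finally, the $0.037$ separation you state holds at one specific parameter point. Your plan delivers it only once that point is found and the computation is actually run.
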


\section{Proofs}\label{sec:proofs}

Throughout, $\mathcal{G}=\sigma(T,\{y_v\}_{v\in T})$ denotes the
$\sigma$-algebra generated by the tree and all labels, and
$D_k=\sum_{v\in N_k}\sigma_v=\alpha_k-\beta_k$ is the signed excess of
generation $k$ (with $\alpha_k,\beta_k$ the numbers of vertices of
generation $k$ whose label agrees, resp.\ disagrees, with the root). We
freely use that, conditionally on $\mathcal{G}$, the features
$\{X_v\}_{v\in T}$ are independent with $X_v\sim\mathbb{P}_{y_v}$.

\subsection{Preliminary lemmas}

\begin{lemma}[symmetry]\label{lem:symmetry}
Under \cref{ass:sym}, for every $\ell$,
$\Prob(y_oT_\ell\le0\,|\,y_o=+1)=\Prob(y_oT_\ell\le0\,|\,y_o=-1)$; hence
$\err(\ell)=\Prob(T_\ell\le0\,|\,y_o=+1)$.
\end{lemma}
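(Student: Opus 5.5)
I will prove the identity by a global class-swap coupling: a measure-preserving transformation of the whole limit object that flips every label, replaces every feature $X_v$ by $\tau(X_v)$, and thereby negates $T_\ell$. Concretely, let $\Phi$ leave the tree $T$ untouched, send every label $y_v$ to $-y_v$, and send every feature $X_v$ to $\tau(X_v)$. The first step is to check that $\Phi$ preserves the joint law of $(T,o,y,X)$:
\begin{itemize}
\item The shape of $T$ is independent of the labels and is not changed.
\item The root label $y_o$ is uniform on $\{\pm1\}$, so its law is invariant under negation.
\item The broadcast transitions depend only on whether a child agrees with its parent, $y_wy_v$, and this is invariant under flipping all labels.
\item Conditionally on $\mathcal{G}$, the features are independent with $X_v\sim\mathbb{P}_{y_v}$.
\end{itemize}
For the last item we need that $X\sim\mathbb{P}_{+}$ implies $\tau(X)\sim\mathbb{P}_{-}$, and conversely. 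By \cref{ass:sym}, $\tau$ is a $\mu$-measure-preserving involution with $\rho_-=\rho_+\circ\tau$. For measurable $A$,
\[
\mathbb{P}_+(\tau(X)\in A)=\int \mathbf 1_A(\tau x)\rho_+(x)\dd\mu(x)=\int \mathbf 1_A(x)\rho_+(\tau x)\dd\mu(x)=\mathbb{P}_-(A),
\]
where the middle equality substitutes $x\mapsto\tau x$, which preserves $\mu$ and uses $\tau^2=\mathrm{id}$. The reverse statement follows symmetrically from $\rho_+=\rho_-\circ\tau$. Hence, conditionally on the transformed labels, the transformed features are independent with the correct laws, and the full joint law is preserved.

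The second step is to show that $\Phi$ negates the statistic. We have $t(\tau x)=-t(x)$ for $\mu$-a.e.\ $x$, because $\rho_\pm(\tau x)=\rho_\mp(x)$. Since $\artanh$ is odd, this gives $M_k(\tau x)=-M_k(x)$ for every $k\ge0$. For $k=0$ this reads $\log\psi(\tau x)=-\log\psi(x)$, and $\log\psi$ is finite a.e.\ by mutual absolute continuity. The sets $N_k$ depend only on $T$, so $T_\ell\circ\Phi=-T_\ell$, and therefore $(y_oT_\ell)\circ\Phi=y_oT_\ell$.

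Conclusion. Because $\Phi$ preserves the law of the object,
\[
\Prob(y_oT_\ell\le0,\ y_o=+1)=\Prob\big((y_oT_\ell)\circ\Phi\le0,\ (y_o\circ\Phi)=+1\big)=\Prob(y_oT_\ell\le0,\ y_o=-1).
\]
Dividing by $\Prob(y_o=\pm1)=\tfrac12$ gives the equality of conditional errors. Averaging the two equal conditional errors then yields $\err(\ell)=\Prob(T_\ell\le0\mid y_o=+1)$, since $y_oT_\ell=T_\ell$ on $\{y_o=+1\}$.

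The main obstacle is the null-set bookkeeping. The identities $\rho_-=\rho_+\circ\tau$ and $t\circ\tau=-t$ hold only $\mu$-a.e., and these null sets must be shown to remain negligible under both $\mathbb{P}_\pm$ and under $\tau$. This holds because $\mathbb{P}_\pm\ll\mu$ and $\tau$ preserves $\mu$. Measurability of $T_\ell$ also needs a word: it is an a.s.\ finite sum, since generations of a $\PGW$ tree are a.s.\ finite.
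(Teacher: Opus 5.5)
Your proposal is correct and is essentially the paper's own argument: flip every label and apply $\tau$ to every feature. This global class swap preserves the law of the limit object and negates both $T_\ell$ and $y_o$, so $y_oT_\ell$ is invariant. You additionally spell out two details the paper leaves implicit: the pushforward identity $\tau_*\mathbb{P}_+=\mathbb{P}_-$ (via the $\mu$-preserving involution), and the fact that the $\mu$-null exceptional sets are $\mathbb{P}_\pm$-null.
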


\begin{proof}
Consider the map that flips every label ($y_v\mapsto-y_v$) and replaces
every feature by $X_v\mapsto\tau(X_v)$. It preserves the law of the limit
object: the tree shape is untouched, the broadcast process depends only on
the relative labels $\{\sigma_v\}$, which are invariant, the root's label
stays uniform, and $X_v\sim\mathbb{P}_{y_v}$ maps to
$\tau(X_v)\sim\mathbb{P}_{-y_v}$ since $\tau$ pushes $\mathbb{P}_{+}$ to
$\mathbb{P}_{-}$ and vice versa (\cref{ass:sym}). Since
$t(\tau(x))=-t(x)$ by \eqref{eq:t-def}, every message and hence $T_\ell$ is
negated, while $y_o$ is negated as well, so $y_oT_\ell$ is invariant in law
conditionally on either value of $y_o$.
\end{proof}

In the remaining proofs we condition on $y_o=+1$ without further comment, so
that $\sigma_v=y_v$ and $\err(\ell)=\Prob(T_\ell\le0)$.

\begin{lemma}[message moments]\label{lem:messages}
Let $m_k:=\E_+[M_k(X)]$ for $k\ge0$. Under
\crefrange{ass:sym}{ass:informative}, for every $k\ge1$:
\begin{enumerate}[leftmargin=2em,itemsep=0.1em]
\item[(i)] \textup{(antisymmetry)}\; $\E_-[M_k(X)]=-m_k$;
\item[(ii)] \textup{(signal lower bound)}\; $m_k\ \ge\ 2\gamma^k\vartheta$;
\item[(iii)] \textup{(second moment)}\;
$\E_\pm[M_k(X)^2]\ \le\ c_\gamma^2\gamma^{2k}\vartheta$, and consequently
$m_k\le c_\gamma\gamma^k\sqrt{\vartheta}$;
\item[(iv)] \textup{(clipping)}\; $|M_k(x)|\le c_k=2\artanh(\gamma^k)\le
c_\gamma\gamma^k$ for all $x$.
\end{enumerate}
\end{lemma}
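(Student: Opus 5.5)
The plan is to derive all four items from the symmetry involution $\tau$ of \cref{ass:sym}, together with two elementary inequalities for $\artanh$. The main device is to pass to the balanced mixture density $\bar\rho=(\rho_++\rho_-)/2$. With respect to $\bar\rho$, we have $\rho_\pm=\bar\rho(1\pm t)$. This turns every expectation under $\mathbb{P}_\pm$ into an integral of a function of $t$ against $\bar\rho$. Moreover, $\tau$ preserves $\bar\rho$ and negates $t$.

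For (i), change variables $x=\tau(x')$. Since $\tau$ is $\mu$-preserving and $\rho_-=\rho_+\circ\tau$, we get
\[
\E_-[M_k(X)]=\int M_k(x)\rho_+(\tau(x))\dd\mu=\int M_k(\tau(x'))\rho_+(x')\dd\mu=-m_k,
\]
using $t\circ\tau=-t$, so that $M_k\circ\tau=-M_k$. The same computation shows that $\E_-[t(X)^2]=\vartheta$, and that $t$ has odd law under $\bar\rho$: $\int g(t)\,\bar\rho\dd\mu=0$ for every odd $g$.

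For (ii), write
\[
m_k=\int M_k\,\bar\rho(1+t)\dd\mu=\int M_k\,t\,\bar\rho\dd\mu .
\]
The term $\int M_k\bar\rho\dd\mu$ drops out because $M_k$ is an odd function of $t$. Now use $2\artanh(u)\ge 2u$ for $u\ge0$, together with oddness, to get $M_k\,t\ge 2\gamma^k t^2$ pointwise. This gives $m_k\ge 2\gamma^k\int t^2\bar\rho\dd\mu$.

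It remains to identify $\int t^2\bar\rho\dd\mu$ with $\vartheta$. Write $\int t^2\bar\rho\dd\mu=\tfrac12(\E_+[t^2]+\E_-[t^2])$. By the $\tau$-invariance from (i), $\E_-[t^2]=\E_+[t^2]$, so this average is exactly $\vartheta$.

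For (iv), the map $u\mapsto\artanh(u)$ is odd and increasing, and $|t|\le1$, so $|M_k(x)|\le2\artanh(\gamma^k)=c_k$. For the second inequality, note that on $[0,\gamma^k]\subseteq[0,\gamma]$ the derivative of $\artanh$ is $1/(1-u^2)\le 1/(1-\gamma^2)$. The mean-value theorem then gives
\[
2\artanh(\gamma^k|t|)\le \frac{2}{1-\gamma^2}\,\gamma^k|t|=c_\gamma\gamma^k|t| .
\]

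For (iii), the pointwise bound from (iv) yields $M_k^2\le c_\gamma^2\gamma^{2k}t^2$. Taking $\E_\pm$ and using $\E_\pm[t^2]=\vartheta$ gives the second-moment bound. The bound on $m_k$ then follows from Jensen: $m_k\le(\E_+[M_k^2])^{1/2}$.

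The only step needing care is the identification of $\int t^2\bar\rho\dd\mu$ and $\E_-[t^2]$ with $\vartheta$ via $\tau$; everything else is pointwise calculus.
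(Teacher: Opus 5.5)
Your proposal is correct and is essentially the paper's proof. You get (i) from the $\tau$ change of variables. You get (ii) by rewriting $m_k=\int M_k\,t\,\bar\rho\dd\mu$ (the paper writes the same thing as $\tfrac12\int M_k\,t\,(\rho_++\rho_-)\dd\mu$) and applying the pointwise bound $z\artanh(z)\ge z^2$, and (iii)--(iv) from $2\artanh(\gamma^k|t|)\le c_\gamma\gamma^k|t|$. The only differences are cosmetic: you justify that last bound by the mean value theorem rather than the series comparison $\artanh z\le z/(1-z^2)$, and you cite Jensen where the paper cites Cauchy--Schwarz.
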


\begin{proof}
Since $\tau$ preserves $\mu$ and $\rho_-=\rho_+\circ\tau$, for any bounded
measurable $g$,
\[
\E_-[g(t(X))]=\int g(t(x))\rho_+(\tau(x))\dd\mu(x)
=\int g(t(\tau(x)))\rho_+(x)\dd\mu(x)=\E_+[g(-t(X))],
\]
using $t\circ\tau=-t$. Taking $g=2\artanh(\gamma^k\cdot)$, which is odd,
gives (i). The same computation shows
$\E_-[t^2]=\E_+[t^2]=\vartheta$.

For (ii), write, using (i) in the form
$m_k=\tfrac12\int 2\artanh(\gamma^k t(x))\,(\rho_+(x)-\rho_-(x))\dd\mu(x)$
and the pointwise identity $\rho_+-\rho_-=t\cdot(\rho_++\rho_-)$ from
\eqref{eq:t-def},
\[
m_k=\tfrac12\int 2\artanh\big(\gamma^k t(x)\big)\,t(x)\,
\big(\rho_+(x)+\rho_-(x)\big)\dd\mu(x)
\;\ge\;\tfrac12\int 2\gamma^k t(x)^2(\rho_++\rho_-)\dd\mu
=2\gamma^k\vartheta,
\]
where we used $\artanh(z)\,z\ge z^2$ for $|z|<1$ (as $|\artanh z|\ge|z|$
with equal signs).

For (iii), $|2\artanh(\gamma^kt)|\le 2\gamma^k|t|/(1-\gamma^{2k}t^2)\le
c_\gamma\gamma^k|t|$ for $k\ge1$, using $\artanh z=z+\tfrac{z^3}{3}+\cdots
\le z/(1-z^2)$ for $z\in[0,1)$ and $\gamma^{2k}t^2\le\gamma^2$; square and
take expectations, then apply Cauchy--Schwarz for the bound on $m_k$.
Part (iv) is monotonicity of $\artanh$ together with the same series bound
evaluated at $z=\gamma^k$.
\end{proof}

\begin{lemma}[broadcast and branching structure]\label{lem:ks}
Let $W_k := D_k/(\gamma\Delta)^k$, with $W_0=1$. Then:
\begin{enumerate}[leftmargin=2em,itemsep=0.1em]
\item[(i)] conditionally on the tree shape, the relative labels
$\{\sigma_v\}$ form a Markov broadcast: each edge independently preserves
the label with probability $\tfrac{1+\gamma}{2}$; consequently
$\E[\sigma_v\sigma_w\,|\,T]=\gamma^{d(v,w)}$ for all $v,w\in T$;
\item[(ii)] $\E[S_k]=\Delta^k$;
\item[(iii)] $(W_k)_{k\ge0}$ is a martingale with respect to
$\mathcal{F}_k=\sigma\big(\text{generations }0,\dots,k\big)$, with
$\E[W_k]=1$ and
\[
\E\big[(W_{k+1}-W_k)^2\big]=\kappa^{-(k+1)},\qquad
\E[W_j W_k]=\E[W_{j\wedge k}^2]=1+\sum_{i=1}^{j\wedge k}\kappa^{-i};
\]
\item[(iv)] if $\kappa<1$: $\E[W_j^2]\le \dfrac{\kappa^{-j}}{1-\kappa}$, so
$\E[D_jD_k]\le\dfrac{(\gamma\Delta)^{j+k}\,\kappa^{-(j\wedge k)}}{1-\kappa}$;
\item[(v)] if $\kappa>1$: $\Var(W_j)\le\dfrac{1}{\kappa-1}$ and
$\E[(W_\infty-W_j)^2]=\sum_{i>j}\kappa^{-i}$, where $W_\infty$ is the
almost-sure and $L^2$ limit of $W_j$.
\end{enumerate}
\end{lemma}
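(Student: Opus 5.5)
The plan is to work throughout with the Poisson-thinning description of the limit object from \cref{sec:csbm} and the conditioning $y_o=+1$ (so $\sigma_v=y_v$). Each of the five items then reduces to an elementary computation for a Poisson branching process whose offspring carry independent edge signs.

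For (i), I would note that, given the tree shape, each vertex's label is obtained from its parent's by multiplying by an independent sign $\varepsilon_e$ with $\E\varepsilon_e=\gamma$. This is exactly what Poisson thinning with rates $a/2$ and $b/2$ says, since the shape is independent of the labels. For $v,w\in T$, the product $\sigma_v\sigma_w$ is then the product of the $\varepsilon_e$ over the $d(v,w)$ edges of the unique path joining them, because the edges above their common ancestor appear twice and cancel. Independence of the signs gives $\E[\sigma_v\sigma_w\,|\,T]=\gamma^{d(v,w)}$. Item (ii) is the standard generation mean of a Galton--Watson tree with $\Poi(\Delta)$ offspring, proved by induction via $\E[S_{k+1}\,|\,\mathcal{F}_k]=\Delta S_k$.

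For (iii), I would condition on $\mathcal{F}_k$. Each $u\in N_k$ contributes $\sum_{w\text{ child of }u}\sigma_u\varepsilon_{uw}$, which is a compound Poisson sum with $\Poi(\Delta)$ terms. These contributions are independent across $u$ given $\mathcal{F}_k$. The compound-Poisson mean and variance formulas give:
\begin{itemize}
\item mean $\Delta\gamma\sigma_u$, hence $\E[D_{k+1}\,|\,\mathcal{F}_k]=\gamma\Delta D_k$, which is the martingale property of $W_k$;
\item variance $\Delta\,\E[\varepsilon^2]=\Delta$, since $\sigma_u^2=\varepsilon^2=1$.
\end{itemize}
Summing the variances gives $\E[(D_{k+1}-\gamma\Delta D_k)^2\,|\,\mathcal{F}_k]=\Delta S_k$. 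Taking expectations with (ii) and dividing by $(\gamma\Delta)^{2(k+1)}$ yields
\[
\E[(W_{k+1}-W_k)^2]=\frac{\Delta^{k+1}}{(\gamma\Delta)^{2(k+1)}}=\kappa^{-(k+1)}.
\]
Orthogonality of martingale increments then gives $\E[W_k^2]=1+\sum_{i=1}^k\kappa^{-i}$. For $j\le k$, the tower property gives $\E[W_jW_k]=\E[W_j\,\E[W_k\,|\,\mathcal{F}_j]]=\E[W_j^2]$. The main care point in the whole lemma is this step: one must verify that the $\sigma_u$ are $\mathcal{F}_k$-measurable, and that the offspring counts and signs of distinct vertices are conditionally independent, so that the cross terms vanish.

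Items (iv) and (v) are summations of the formula from (iii).
\begin{itemize}
\item For (iv), with $\kappa<1$, write $\sum_{i=0}^j\kappa^{-i}=\kappa^{-j}\sum_{m=0}^j\kappa^m\le\kappa^{-j}/(1-\kappa)$. Multiplying $\E[W_jW_k]=\E[W_{j\wedge k}^2]$ by $(\gamma\Delta)^{j+k}$ gives the stated bound on $\E[D_jD_k]$.
\item For (v), with $\kappa>1$, $\Var(W_j)=\sum_{i=1}^j\kappa^{-i}\le\sum_{i\ge1}\kappa^{-i}=1/(\kappa-1)$. This makes $(W_j)$ an $L^2$-bounded martingale, so it converges almost surely and in $L^2$ to some $W_\infty$. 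Orthogonality of the increments beyond $j$, with the $L^2$ limit passed inside, gives $\E[(W_\infty-W_j)^2]=\sum_{i>j}\kappa^{-i}$.
\end{itemize}
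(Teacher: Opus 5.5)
Your proposal is correct and follows the paper's proof essentially step for step. The only cosmetic difference is in (iii): you compute the conditional mean and variance of $D_{k+1}$ parent by parent as compound Poisson sums, whereas the paper aggregates generation $k+1$ into the conditionally independent Poisson counts $\alpha_{k+1},\beta_{k+1}$, with means $\tfrac{a\alpha_k+b\beta_k}{2}$ and $\tfrac{a\beta_k+b\alpha_k}{2}$; both routes give $\gamma\Delta D_k$ and $\Delta S_k$.
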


\begin{proof}
(i) In the two-type description, a child's relative label equals its
parent's with probability $\frac{a/2}{\Delta}=\frac{1+\gamma}{2}$
independently across children, and the total offspring count
$\Poi(a/2)+\Poi(b/2)=\Poi(\Delta)$ is independent of the types by Poisson
thinning. Multiplying independent edge factors with mean $\gamma$ along the
path from $v$ to $w$ gives $\E[\sigma_v\sigma_w|T]=\gamma^{d(v,w)}$.
(ii) is the standard branching-process first moment.
(iii) Conditionally on $\mathcal{F}_k$, $\alpha_{k+1}$ and $\beta_{k+1}$ are
independent Poissons with means $\tfrac{a\alpha_k+b\beta_k}{2}$ and
$\tfrac{a\beta_k+b\alpha_k}{2}$, so
$\E[D_{k+1}|\mathcal{F}_k]=\tfrac{a-b}{2}D_k=\gamma\Delta\,D_k$ and
$\Var(D_{k+1}|\mathcal{F}_k)=\Delta S_k$. The martingale property follows;
moreover
$\E[(W_{k+1}-W_k)^2]=\E[\Delta S_k]/(\gamma\Delta)^{2(k+1)}
=\Delta^{k+1}/(\gamma\Delta)^{2k+2}=\kappa^{-(k+1)}$, and orthogonality of
martingale increments gives the formula for $\E[W_j^2]$; for $j\le k$,
$\E[W_jW_k]=\E[W_j\E[W_k|\mathcal{F}_j]]=\E[W_j^2]$.
(iv) $1+\sum_{i=1}^{j}\kappa^{-i}
=\frac{\kappa^{-j}-\kappa}{1-\kappa}\le\frac{\kappa^{-j}}{1-\kappa}$,
and $\E[D_jD_k]=(\gamma\Delta)^{j+k}\E[W_{j\wedge k}^2]$.
(v) For $\kappa>1$ the increment variances are summable:
$\Var(W_j)=\sum_{i=1}^{j}\kappa^{-i}\le\frac{1}{\kappa-1}$, and $L^2$
martingale convergence applies; this is the Kesten--Stigum $L^2$ regime
\citep{kesten1966limit}.
\end{proof}

\subsection{Proof of Theorem~\ref{thm:below}}

The heart of the proof is a second-moment bound on the cumulative
contribution of all generations beyond $\ell$.

\begin{lemma}[tail contribution]\label{lem:tail}
Let $\kappa<1$ and, for $\ell<\ell'$, set
$R:=T_{\ell'}-T_\ell$. Then
\[
R=\sum_{k=\ell+1}^{\ell'}\sum_{v\in N_k}M_k(X_v)
\qquad\text{satisfies}\qquad
\E[R^2]\;\le\; C_1\,\kappa^{\ell+1},
\qquad C_1=\frac{3c_\gamma^2\vartheta}{(1-\kappa)^3}.
\]
\end{lemma}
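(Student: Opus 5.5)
The plan is a conditional bias–variance decomposition with respect to $\mathcal{G}=\sigma(T,\{y_v\})$, with $y_o=+1$ as in the standing convention after \cref{lem:symmetry}. Write
\[
\E[R^2]=\E\big[\Var(R\,|\,\mathcal{G})\big]+\E\big[\E[R\,|\,\mathcal{G}]^2\big],
\]
and bound the two pieces separately. Each should come out as a multiple of $c_\gamma^2\vartheta\,\kappa^{\ell+1}$, and the sum of the multiples should be at most $3/(1-\kappa)^3$.

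\emph{Conditional mean.} Conditionally on $\mathcal{G}$, $X_v\sim\mathbb{P}_{y_v}$ and $y_v=\sigma_v$. By the antisymmetry \cref{lem:messages}(i), $\E[M_k(X_v)\,|\,\mathcal{G}]=\sigma_v m_k$, so
\[
\E[R\,|\,\mathcal{G}]=\sum_{k=\ell+1}^{\ell'} m_k D_k .
\]
This is exactly where the cancellation enters: we pay for the signed excess $D_k$, not for $S_k$. Expanding the square, I will use $m_jm_k\le c_\gamma^2\vartheta\,\gamma^{j+k}$ from \cref{lem:messages}(iii) and $\E[D_jD_k]\le(\gamma\Delta)^{j+k}\kappa^{-(j\wedge k)}/(1-\kappa)$ from \cref{lem:ks}(iv). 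Since $\gamma^{j+k}(\gamma\Delta)^{j+k}=\kappa^{j+k}$, each term is at most $\frac{c_\gamma^2\vartheta}{1-\kappa}\kappa^{j\vee k}$. Summing over $j,k>\ell$ and grouping by $m=j\vee k$, there are $2(m-\ell)-1$ pairs with maximum $m$. This gives
\[
\sum_{m>\ell}\big(2(m-\ell)-1\big)\kappa^m=\kappa^{\ell+1}\sum_{i\ge0}(2i+1)\kappa^i=\kappa^{\ell+1}\frac{1+\kappa}{(1-\kappa)^2}\le\frac{2\kappa^{\ell+1}}{(1-\kappa)^2}.
\]
Hence $\E[\E[R|\mathcal{G}]^2]\le 2c_\gamma^2\vartheta\,\kappa^{\ell+1}/(1-\kappa)^3$.

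\emph{Conditional variance.} Given $\mathcal{G}$ the summands are independent. Each $\Var(M_k(X_v)|\mathcal{G})$ is at most $\E_{\pm}[M_k^2]\le c_\gamma^2\gamma^{2k}\vartheta$ by \cref{lem:messages}(iii). So $\Var(R|\mathcal{G})\le c_\gamma^2\vartheta\sum_{k>\ell}\gamma^{2k}S_k$. Taking expectations with $\E[S_k]=\Delta^k$ (\cref{lem:ks}(ii)) gives
\[
\E\big[\Var(R|\mathcal{G})\big]\le c_\gamma^2\vartheta\sum_{k>\ell}\kappa^k\le\frac{c_\gamma^2\vartheta\,\kappa^{\ell+1}}{1-\kappa}\le\frac{c_\gamma^2\vartheta\,\kappa^{\ell+1}}{(1-\kappa)^3}.
\]
Adding this to the mean bound yields $C_1\kappa^{\ell+1}$ with $C_1=3c_\gamma^2\vartheta/(1-\kappa)^3$, uniformly in $\ell'$.

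The main obstacle is the cross-generation term in the squared mean. The correlations $\E[D_jD_k]$ do not decay in $|j-k|$ beyond the factor $\kappa^{-(j\wedge k)}$, so we genuinely need the martingale identity $\E[W_jW_k]=\E[W_{j\wedge k}^2]$. This produces the $\kappa^{j\vee k}$ structure, and the counting of pairs by their maximum is what costs the extra $(1-\kappa)^{-2}$. Everything else is routine once \cref{lem:messages} and \cref{lem:ks} are in hand.
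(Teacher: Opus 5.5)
Your proposal is correct and follows the paper's proof essentially line for line. Both use the conditional bias--variance split over $\mathcal{G}$, antisymmetry to reduce the conditional mean to $\sum_k m_kD_k$, and the bounds of \cref{lem:messages}(iii) and \cref{lem:ks}(iv) to get the $\kappa^{\max(j,k)}$ structure, followed by counting pairs by their maximum. The only cosmetic difference is that you evaluate $\sum_{i\ge0}(2i+1)\kappa^i=(1+\kappa)/(1-\kappa)^2$ exactly and then use $1+\kappa\le2$, whereas the paper first bounds $2(m-\ell)-1\le 2(m-\ell)$; both give the same constant.
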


\begin{proof}
Condition on $\mathcal{G}$ and decompose
$\E[R^2]=\E[\Var(R|\mathcal{G})]+\E[(\E[R|\mathcal{G}])^2]$.

\emph{Variance part.} Given $\mathcal{G}$ the summands are independent, so
by \cref{lem:messages}(iii),
\[
\Var(R\,|\,\mathcal{G})=\sum_{k>\ell}\sum_{v\in N_k}\Var\big(M_k(X_v)\,|\,
y_v\big)\;\le\;c_\gamma^2\vartheta\sum_{k>\ell}S_k\gamma^{2k},
\]
whence, by \cref{lem:ks}(ii),
$\E[\Var(R|\mathcal{G})]\le
c_\gamma^2\vartheta\sum_{k>\ell}(\gamma^2\Delta)^k
=c_\gamma^2\vartheta\,\frac{\kappa^{\ell+1}}{1-\kappa}$.

\emph{Mean part.} By \cref{lem:messages}(i),
$\E[M_k(X_v)\,|\,\mathcal{G}]=y_v\,m_k=\sigma_v m_k$ (recall $y_o=+1$), so
$\E[R|\mathcal{G}]=\sum_{k>\ell}m_kD_k$. Using
$m_jm_k\le c_\gamma^2\vartheta\,\gamma^{j+k}$
(\cref{lem:messages}(iii)) and \cref{lem:ks}(iv),
\[
\E\big[(\E[R|\mathcal{G}])^2\big]
=\sum_{j,k>\ell}m_jm_k\,\E[D_jD_k]
\;\le\;\frac{c_\gamma^2\vartheta}{1-\kappa}
\sum_{j,k>\ell}\gamma^{j+k}(\gamma\Delta)^{j+k}\kappa^{-(j\wedge k)}
=\frac{c_\gamma^2\vartheta}{1-\kappa}\sum_{j,k>\ell}\kappa^{\max(j,k)},
\]
where the last equality uses
$\gamma^{j+k}(\gamma\Delta)^{j+k}=(\gamma^2\Delta)^{j+k}=\kappa^{j+k}$ and
$\kappa^{j+k-(j\wedge k)}=\kappa^{\max(j,k)}$. The number of pairs
$(j,k)$ with $j,k>\ell$ and $\max(j,k)=m$ is $2(m-\ell)-1\le2(m-\ell)$, so
\[
\sum_{j,k>\ell}\kappa^{\max(j,k)}\le 2\sum_{m>\ell}(m-\ell)\kappa^{m}
=2\kappa^{\ell}\sum_{r\ge1}r\kappa^{r}
=\frac{2\kappa^{\ell+1}}{(1-\kappa)^{2}}.
\]
Combining, $\E[(\E[R|\mathcal{G}])^2]\le
\frac{2c_\gamma^2\vartheta\,\kappa^{\ell+1}}{(1-\kappa)^{3}}$, and adding
the variance part (using $\tfrac{1}{1-\kappa}\le\tfrac{1}{(1-\kappa)^3}$)
gives the claim.
\end{proof}

\begin{lemma}[anti-concentration]\label{lem:anticonc}
Under \cref{ass:density}, for every $\ell'\ge0$ and $s>0$, we have that
$\Prob\big(|T_{\ell'}|\le s\big)\le 2Bs$.
\end{lemma}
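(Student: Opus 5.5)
The plan is to condition on everything except the root's own feature and then use the bounded density of $\log\psi(X_o)$ from \cref{ass:density}. Write
\[
T_{\ell'} \;=\; \log\psi(X_o) + U,\qquad U:=\sum_{k=1}^{\ell'}\sum_{v\in N_k}M_k(X_v).
\]
For $\ell'=0$ we simply have $U=0$. We work under $y_o=+1$, as in the convention following \cref{lem:symmetry}. The key structural fact is that $X_o$ is independent of the pair $(\mathcal{G},\{X_v\}_{v\neq o})$, and $U$ is a measurable function of that pair. Indeed, given $y_o=+1$, the tree shape, the non-root labels and the non-root features are generated without reference to $X_o$, and $X_o\sim\mathbb{P}_+$ independently.

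Next, condition on $U=u$. For a fixed $u\in\R$,
\[
\Prob\big(|\log\psi(X_o)+u|\le s\big)=\Prob\big(\log\psi(X_o)\in[-u-s,\,-u+s]\big)\le B\cdot 2s,
\]
because under $\mathbb{P}_+$ the variable $\log\psi(X_o)$ has a density bounded by $B$, and the interval has length $2s$. Integrating this bound over the law of $U$ (Fubini, using the independence above) gives $\Prob(|T_{\ell'}|\le s\mid y_o=+1)\le 2Bs$.

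It remains to remove the conditioning on $y_o$. The cleanest route is \cref{lem:symmetry}'s involution: the joint map $y\mapsto -y$, $X\mapsto\tau(X)$ preserves the law and sends $T_{\ell'}\mapsto -T_{\ell'}$, so $|T_{\ell'}|$ has the same law under $y_o=-1$ as under $y_o=+1$. Alternatively, the identity $\rho_-=\rho_+\circ\tau$ shows directly that $\log\psi(X)$ under $\mathbb{P}_-$ is the negative of its law under $\mathbb{P}_+$, hence also has density bounded by $B$. Either way, averaging the two conditional bounds gives the unconditional statement.

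There is no real obstacle here. The only point requiring care is justifying the independence of $X_o$ from $U$. In particular, one should not also condition on $y_o$ through $U$ in a way that reintroduces dependence. Conditioning on $y_o$ first and then noting that the root's feature enters only through $\log\psi(X_o)$ settles this.
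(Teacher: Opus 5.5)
Your proposal is correct and matches the paper's proof. The paper also splits off the root's evidence, $T_{\ell'}=\log\psi(X_o)+U$, and uses the independence of $X_o$ from $U$ given $y_o$ to bound the probability of a length-$2s$ window by $2Bs$. It handles $y_o=-1$ exactly as in your second route: $\log\psi\circ\tau=-\log\psi$ makes the law under $\mathbb{P}_-$ the reflection of the law under $\mathbb{P}_+$, with the same density bound.
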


\begin{proof}
Write $T_{\ell'}=\log\psi(X_o)+U$ where $U$ collects the contributions of
all non-root vertices. Conditionally on $y_o$, the feature $X_o$ is
independent of $U$ (which is a function of the tree, the other labels, and
the other features). Under $\mathbb{P}_+$, $\log\psi(X)$ has density
bounded by $B$; under $\mathbb{P}_-$ the law of $\log\psi(X)$ is the
pushforward of the former under $z\mapsto-z$ (by \cref{ass:sym},
since $\log\psi\circ\tau=-\log\psi$), so its density obeys the same bound.
Hence, conditioning on $(y_o,U)$,
$\Prob\big(\log\psi(X_o)\in[-s-U,\,s-U]\,\big|\,y_o,U\big)\le 2Bs$, and the
claim follows by averaging.
\end{proof}

\begin{proof}[Proof of \cref{thm:below}]
Fix $0\le\ell<\ell'$ and abbreviate $R=T_{\ell'}-T_\ell$. For any $s>0$,
\begin{align*}
\err(\ell)-\err(\ell')
&=\Prob(T_\ell\le0)-\Prob(T_{\ell'}\le0)
\;\le\;\Prob\big(T_\ell\le0,\;T_{\ell'}>0\big)\\
&\le\;\Prob\big(0<T_{\ell'}\le s\big)
+\Prob\big(T_\ell\le 0,\;T_{\ell'}>s\big)
\;\le\;2Bs+\Prob\big(|R|>s\big),
\end{align*}
because on the second event $R=T_{\ell'}-T_\ell> s$. Symmetrically,
$\err(\ell')-\err(\ell)\le\Prob(-s< T_{\ell'}\le0)+\Prob(T_{\ell'}\le-s,\,
T_\ell>0)\le 2Bs+\Prob(|R|>s)$. By Chebyshev's inequality and
\cref{lem:tail}, $\Prob(|R|>s)\le C_1\kappa^{\ell+1}/s^2$, so
\[
\big|\err(\ell)-\err(\ell')\big|\;\le\;
2Bs+\frac{C_1\kappa^{\ell+1}}{s^{2}}\qquad\text{for every }s>0.
\]
Choosing $s=\big(C_1\kappa^{\ell+1}/B\big)^{1/3}$ balances the two terms
and yields \eqref{eq:below-main}. The bound is uniform in $\ell'$ and
vanishes as $\ell\to\infty$, so $(\err(\ell))_{\ell\ge0}$ is a Cauchy
sequence; letting $\ell'\to\infty$ in \eqref{eq:below-main} gives the
statement about $\err(\infty)$.

For the identification of the limit, \cref{lem:tail} says
$\E[(T_{\ell'}-T_\ell)^2]\le C_1\kappa^{\ell+1}$, so $(T_\ell)$ is Cauchy
in $L^2$ and converges to some $T_\infty$ with
$\E[(T_\infty-T_\ell)^2]\le C_1\kappa^{\ell+1}$. The anti-concentration
\cref{lem:anticonc} holds for $T_\infty$ as well (its proof only
uses that $T_\infty-\log\psi(X_o)$ is independent of $X_o$ given $y_o$,
which is preserved in the limit), whence, for every $s>0$,
\[
\big|\Prob(T_\ell\le0)-\Prob(T_\infty\le0)\big|
\;\le\;2Bs+\Prob\big(|T_\infty-T_\ell|>s\big)
\;\le\;2Bs+\frac{C_1\kappa^{\ell+1}}{s^{2}}\;\xrightarrow[\ell\to\infty]{}\;0
\]
(optimizing $s$ as above), so
$\err(\infty)=\lim_\ell\Prob(T_\ell\le0)=\Prob(T_\infty\le0)
=\Prob(y_oT_\infty\le0)$ by \cref{lem:symmetry}.
\end{proof}

\subsection{Proof of Theorem~\ref{thm:above}}

\begin{proof}[Proof of \cref{thm:above}]
Fix $s\in(0,1)$. Condition on $y_o=+1$ (\cref{lem:symmetry}) and split
$T_\ell=\log\psi(X_o)+T'$, where
$T'=\sum_{k=1}^{\ell}\sum_{v\in N_k}M_k(X_v)$ collects the non-root terms;
all of these are bounded (\cref{lem:messages}(iv)), and conditionally
on $\mathcal{G}$ they are independent with conditional mean
\[
\mu' \;:=\; \E[T'\,|\,\mathcal{G}] \;=\; \sum_{k=1}^{\ell}m_kD_k .
\]

\emph{Step 1: a good event for the branching process.} Let
\[
E_W=\Big\{W_1\ge\tfrac34\Big\}\cap
\Big\{\sup_{k\ge2}|W_k-W_1|\le\tfrac14\Big\},
\qquad
E_S=\Big\{S_k\le K k^2\Delta^k\ \ \forall\,1\le k\le\ell\Big\},
\]
with $K=\kappa^{s\ell}$. On $E_W$ we have $W_k\ge\tfrac12$, i.e.\
$D_k\ge\tfrac12(\gamma\Delta)^k$, for all $1\le k\le\ell$. By Chebyshev and
\cref{lem:ks}(v), $\Prob(W_1<\tfrac34)\le16\Var(W_1)=16\kappa^{-1}$;
by Doob's weak-type maximal inequality applied to the nonnegative
submartingale $\big((W_k-W_1)^2\big)_{k\ge1}$ at level $\tfrac1{16}$,
$\Prob\big(\sup_{k\ge2}|W_k-W_1|>\tfrac14\big)\le
16\sum_{i\ge2}\kappa^{-i}=\frac{16}{\kappa(\kappa-1)}$. Hence
\[
\Prob(E_W^{c})\;\le\;\frac{16}{\kappa}+\frac{16}{\kappa(\kappa-1)}
=\frac{16}{\kappa}\cdot\frac{\kappa}{\kappa-1}=\frac{16}{\kappa-1}.
\]
By Markov's inequality and \cref{lem:ks}(ii),
$\Prob(E_S^{c})\le\sum_{k=1}^{\ell}\frac{\Delta^k}{Kk^2\Delta^k}
\le\frac{\pi^2}{6K}\le 2\kappa^{-s\ell}$. (The choice of the truncation
level $K$ is the only place $s$ enters.)

\emph{Step 2: the conditional mean is large on the good event.} On $E_W$,
using \cref{lem:messages}(ii),
\[
\mu'=\sum_{k=1}^{\ell}m_kD_k\;\ge\;\sum_{k=1}^{\ell}
2\gamma^k\vartheta\cdot\tfrac12(\gamma\Delta)^k
=\vartheta\sum_{k=1}^{\ell}\kappa^{k}\;\ge\;\vartheta\,\kappa^{\ell}.
\]
(All terms are positive, so keeping only $k=\ell$ suffices.)

\emph{Step 3: conditional concentration.} On the event
$\{T_\ell\le0\}$ we must have $\log\psi(X_o)\le-\mu'/2$ or
$T'\le\mu'/2$. Conditionally on $\mathcal{G}$, $T'$ is a sum of independent
random variables with ranges of width $2c_k\le2c_\gamma\gamma^k$ each, so by
Hoeffding's inequality, on $E_W\cap E_S$,
\[
\Prob\Big(T'\le\frac{\mu'}{2}\,\Big|\,\mathcal{G}\Big)
\le\exp\!\Big(\!-\frac{(\mu'/2)^2\cdot 2}{4\sum_{k=1}^{\ell}S_kc_k^2}\Big)
\le\exp\!\Big(\!-\frac{\mu'^2}{8c_\gamma^2\sum_{k=1}^{\ell}S_k
\gamma^{2k}}\Big).
\]
On $E_S$,
$\sum_{k=1}^{\ell}S_k\gamma^{2k}\le K\sum_{k=1}^{\ell}k^2\kappa^{k}\le
K\ell^2\frac{\kappa^{\ell+1}}{\kappa-1}$, so with
$\mu'\ge\vartheta\kappa^\ell$ and $K=\kappa^{s\ell}$,
\[
\Prob\Big(T'\le\frac{\mu'}{2}\,\Big|\,\mathcal{G}\Big)
\;\le\;\exp\!\Big(\!-\frac{\vartheta^2}{8c_\gamma^2}\,
(\kappa-1)\,\frac{\kappa^{(1-s)\ell-1}}{\ell^{2}}\Big)
\;=\;\exp\!\Big(\!-c_0(\kappa-1)\frac{\kappa^{(1-s)\ell-1}}{\ell^2}\Big).
\]
For the root term, $X_o$ is independent of $\mathcal{G}$, and for any
$u\ge0$ the change-of-measure bound
$\Prob_+\big(\log\psi(X)\le -u\big)=\E_-\big[\psi(X)\,
\mathbf{1}\{\psi(X)\le e^{-u}\}\big]\le e^{-u}$ gives, on the good event,
$\Prob\big(\log\psi(X_o)\le-\mu'/2\,|\,\mathcal{G}\big)\le
e^{-\vartheta\kappa^{\ell}/2}$.

\emph{Step 4: assemble.} Combining,
\[
\err(\ell)\le\Prob(E_W^c)+\Prob(E_S^c)
+\E\Big[\mathbf{1}_{E_W\cap E_S}\Big(
\Prob\big(T'\le\tfrac{\mu'}{2}\big|\mathcal{G}\big)
+\Prob\big(\log\psi(X_o)\le-\tfrac{\mu'}{2}\big|\mathcal{G}\big)\Big)\Big],
\]
which is bounded by the four terms of \eqref{eq:above-main}.
\end{proof}

\subsection{Proof of Proposition~\ref{prop:floor}}

\begin{proof}
The root is isolated ($S_1=0$) with probability
$\Prob(\Poi(\Delta)=0)=e^{-\Delta}$, an event determined by the tree alone
and hence independent of $(y_o,X_o)$. Conditionally on $\{S_1=0\}$, the
depth-$\ell$ observation reduces to $X_o$ (all $N_k$, $k\ge1$, are empty),
the label prior remains uniform, and $X_o\sim\mathbb{P}_{y_o}$; therefore
any classifier's conditional error is at least the Bayes error of the
balanced mixture, which equals
$\int\min\big(\tfrac12\rho_+,\tfrac12\rho_-\big)\dd\mu
=\varepsilon_{\mathrm{feat}}$. Multiplying by $\Prob(S_1=0)$ gives the desired bound. For $h_\ell$, conditionally on isolation
$T_\ell=\log\psi(X_o)$, whose sign is the Bayes rule for the mixture, so
the conditional error is exactly $\varepsilon_{\mathrm{feat}}$. For the
Gaussian mixture the Bayes error is
$\Phi\big(-\|\boldsymbol{\mu}\|/\sigma\big)=\Phi(-\zeta)$.
\end{proof}

\begin{proof}[Proof of \cref{cor:depth}]
Part 1 is immediate from \cref{thm:below} with
$3B^{2/3}C_1^{1/3}\kappa^{(\ell+1)/3}\le\epsilon$. For part 2, apply
\eqref{eq:above-main} with $s=\tfrac12$: the three stated conditions are
precisely equivalent to the three $\ell$-dependent terms being at most
$\epsilon/3$ each, and each condition holds for all $\ell$ large enough
(the left-hand sides grow geometrically in $\ell$ against at most
polynomial right-hand sides). Monotonicity from $4/\log\kappa$ on
follows since $\ell\mapsto\tfrac{\ell}{2}\log\kappa-2\log\ell$ is
nondecreasing there; hence $\ell_1$ is well defined and the conclusion
holds for every $\ell\ge\ell_1$. The first
condition gives $\ell\ge2\log_\kappa(6/\epsilon)$ and dominates as
$\epsilon\to0$; the second is satisfied once
$\kappa^{\ell/2}\ge\big(\kappa\ell^2/(c_0(\kappa-1))\big)\log(3/\epsilon)$,
which adds only $O_{\mathrm{model}}(\log\log(1/\epsilon))$; likewise the
third.
\end{proof}

\section{Numerical illustrations}\label{sec:experiments}

We simulate the limit object directly: broadcast-labelled $\PGW(\Delta)$
trees with Gaussian features (\cref{ex:gaussian}), for which
$\log\psi(X_v)\,|\,y_v\sim\mathcal{N}(2\zeta^2y_v,4\zeta^2)$ can be sampled
as a scalar. On every sampled tree we evaluate both the pairwise rule
$h_\ell$ and \emph{exact belief propagation} truncated at depth $\ell$
(the recursion $F_\gamma$ of \cref{rem:bp}, computed bottom-up), so
the two rules are compared on identical randomness. Unless stated
otherwise $\Delta=3$ and each point averages
$4\times10^5$ independent trees ($6\times10^5$ for the flip probabilities
in \cref{fig:flips}); two-standard-error bands are drawn but are
typically thinner than the lines. Code to reproduce all figures accompanies
the paper.

\begin{figure}[t]
\centering
\includegraphics[width=\linewidth]{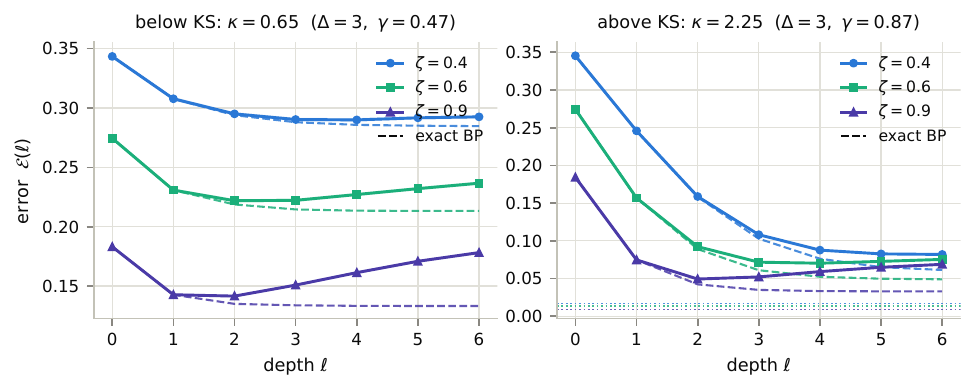}
\caption{Error against depth $\ell$ on the limit tree for the pairwise
rule $h_\ell$ (solid, filled markers) and exact BP truncated at depth
$\ell$ (dashed), computed on the same trees, for
feature strengths $\zeta\in\{0.4,0.6,0.9\}$ ($\Delta=3$; $4\times10^5$
trees per point; shaded bands are $\pm2$ standard errors). \textbf{Left}
(below threshold, $\kappa=0.65$): the pairwise curves saturate within
$2$--$3$ layers and then drift mildly upward
(\cref{obs:nonmono}), while the BP curves are monotone.
\textbf{Right} (above threshold, $\kappa=2.25$): depth is productive and
the error decays geometrically toward a floor; dotted lines mark the
universal floor $e^{-\Delta}\Phi(-\zeta)$ of
\cref{prop:floor}. BP curves are drawn without uncertainty
bands; their standard errors match the pairwise ones.}
\label{fig:curves}
\end{figure}

\paragraph{The dichotomy.} \cref{fig:curves} displays
$\err(\ell)$ for $\ell=0,\dots,6$ in a below-threshold configuration
($\gamma=0.467$, $\kappa=0.65$) and an above-threshold one
($\gamma=0.867$, $\kappa=2.25$). Below threshold, virtually the entire
benefit of the graph is realized by depth $2$: for $\zeta=0.6$ the error
falls $0.274\to0.222$ by $\ell=2$ and moves by less than $0.016$ over all
deeper layers combined. Above threshold, depth keeps paying: the same
feature strength yields $0.274\to0.070$ by $\ell=4$, four times the
below-threshold benefit, consistent with the geometric amplification of
\cref{thm:above}.

\paragraph{The exact-BP baseline and the price of linearization.}
The dashed curves in \cref{fig:curves} run exact BP on the same
trees. Three facts emerge. First, BP's curves are non-increasing in depth
at every configuration, within statistical error---the monotonicity that
\cref{obs:nonmono} attributes to Bayes optimality, verified
empirically. Second, the two rules are indistinguishable through
$\ell=1$ (as they must be; \cref{rem:bp}) and separate from
$\ell=2$ on: the \emph{price of linearization}
$\err(\ell)-\err_{\mathrm{BP}}(\ell)$ at $\zeta=0.6$ grows to
$0.023$ (below threshold) and $0.026$ (above) by $\ell=6$. Its
composition differs by regime: below threshold the gap is mostly the
pairwise rule's upward drift ($+0.015$ from $\ell=2$ to $6$, against
BP's further $-0.005$), while above threshold it is mostly BP's
continued improvement toward a strictly lower floor ($-0.040$, against
the pairwise rule's late $+0.005$ drift). Third, BP's own depth curve
saturates geometrically below threshold and races to its floor
above; its saturation \emph{rate} is measured directly through its
decision flips in \cref{fig:flips} and analyzed below---it is
strictly faster than the pairwise rule's, which refines the conjecture
of \cref{sec:discussion}.

\begin{figure}[t]
\centering
\begin{minipage}[t]{0.48\linewidth}
\centering
\includegraphics[width=\linewidth]{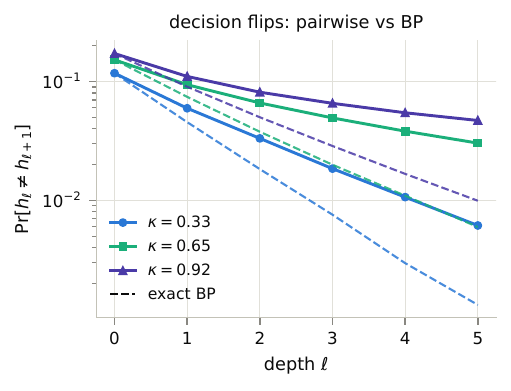}
\end{minipage}\hfill
\begin{minipage}[t]{0.48\linewidth}
\centering
\includegraphics[width=\linewidth]{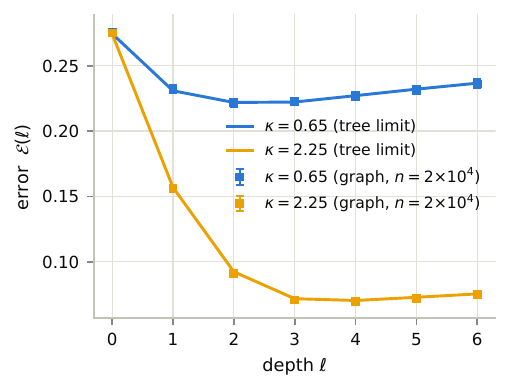}
\end{minipage}
\caption{\textbf{Left:} probability that one more layer changes the
decision, below the threshold
($\Delta=3$, $\zeta=0.6$; $6\times10^5$ trees): pairwise rule (solid)
and exact BP (dashed), on the same trees. Both decay geometrically, BP
strictly faster; fitted rates, and the effective-ratio prediction for
BP, are reported in the text. \textbf{Right:} the tree-limit curves
(lines) against the classifier run on finite sparse CSBM graphs with
$n=2\times10^4$ (markers; mean $\pm$ one standard deviation over $12$
graphs, all vertices classified---note that errors of vertices within one
graph are correlated, so these bars measure graph-to-graph variability,
not estimator error), for $\zeta=0.6$. Finite-graph
means match the limit within $\pm0.002$ at every depth shown.}
\label{fig:flips}\label{fig:graph}
\end{figure}

\paragraph{Geometric saturation at rate $\sqrt{\kappa}$.}
\cref{fig:flips} (left) examines below-threshold saturation directly
through the decision-flip probability $\Prob[h_\ell\ne h_{\ell+1}]$. (The
flip event is the union of the two one-sided events in the proof of
\cref{thm:below}, so it is bounded by \emph{twice} the right-hand
side of \eqref{eq:below-main}.) The decay is cleanly geometric over the
range simulated. Least-squares rates fitted to the last four points give
$0.570$ at $\kappa=1/3$, $0.772$ at $\kappa=0.653$, and $0.833$ at
$\kappa=0.923$, against $\sqrt{\kappa}=0.577$, $0.808$, $0.961$
respectively: the fitted rate matches $\sqrt{\kappa}$ well away from the
threshold (for $\kappa=1/3$ the successive ratios
$0.51,0.56,0.56,0.58,0.58$ climb monotonically toward $0.577$), while at
$\kappa=0.923$ the asymptotic rate has visibly not yet set in by
$\ell=5$---the transient window grows as $\kappa\uparrow1$, consistent
with the near-critical behaviour examined below. The evidence supports
the conjecture that the exponent $\ell/3$ of \cref{thm:below} can
be improved to $\ell/2$ but not beyond, away from the critical window;
\cref{thm:fliplower} proves the matching $\kappa^{\ell/2}$ lower
bound for these flip probabilities, so the exponent seen here is pinned
from below and conjectural only from above.

\paragraph{BP saturates faster: an effective per-layer ratio.}
The dashed curves in \cref{fig:flips} (left) report the same
decision-flip probabilities for exact BP, on the same trees. They too
decay geometrically---but strictly faster than the pairwise rule's,
with fitted rates $0.41$, $0.54$, $0.58$ at
$\kappa=1/3,\,0.653,\,0.923$, far below $\sqrt{\kappa}$. The mechanism
is visible in the recursion: the pairwise rule is the linearization of
$F_\gamma$ at the origin, where its slope $\gamma$ is \emph{maximal};
under exact BP the incoming messages carry $O(1)$ feature evidence, so
the derivatives that propagate a deep perturbation to the root are
evaluated away from zero, where $|F_\gamma'(L)|<\gamma$. The natural
annealed prediction for BP's flip rate is therefore
$\kappa_{\mathrm{BP}}^{\ell/2}$ with
\[
\kappa_{\mathrm{BP}} \;=\; \Delta\,\E\big[F_\gamma'(L)^2\big],
\]
the expectation under the stationary law of the BP message $L$ on the
feature-decorated tree. Computing $\kappa_{\mathrm{BP}}$ by population
dynamics (pool $2\times10^6$, $20$ iterations; code included) gives
$\sqrt{\kappa_{\mathrm{BP}}}=0.424,\,0.559,\,0.621$ at the three
configurations---in close agreement with the measured BP rates, which
climb toward these values from below with the same transient shape as
the pairwise flips approach $\sqrt{\kappa}$. A deeper run at
$\kappa=0.653$ ($2\times10^5$ trees to $\ell=9$) settles the
asymptotics: fitted over
$\ell=5,\dots,8$, the pairwise rate is $0.811$ (against
$\sqrt{\kappa}=0.808$) and the BP rate is $0.551$ (against
$\sqrt{\kappa_{\mathrm{BP}}}=0.559$)---each rule lands on its own
predicted exponent.
Linearization thus carries a rate-level cost alongside the pointwise
one: in our simulations the pairwise rule is less accurate than BP at
every depth beyond $1$, \emph{and} its decisions keep churning at the
envelope rate $\kappa^{\ell/2}$ while BP's settle at the strictly
faster $\kappa_{\mathrm{BP}}^{\ell/2}$. Three qualifications and one
prediction. First, the existence of the stationary message law (hence
of the expectation defining $\kappa_{\mathrm{BP}}$) is an empirical
fact here---the population dynamics converges rapidly and
reproducibly---not a theorem; we do not prove the underlying recursive
distributional equation has a unique attracting fixed point. Second,
the derivation is annealed: it treats the derivative factors along a
path as independent, so $\kappa_{\mathrm{BP}}$ is a prediction whose
status is quantitative agreement, not proof. Third, the object itself
has a lineage: contraction coefficients of the form
$\Delta\,\E[F_\gamma'(L)^2]$, evaluated at a BP fixed point, are
spin-glass-susceptibility--type stability parameters of the kind used
in the reconstruction analysis of \citet{mezard2006reconstruction};
$\kappa_{\mathrm{BP}}$ is that stability parameter at the
\emph{informative}, feature-dressed fixed point, whereas the bare
$\kappa$ is its value at the uninformative one. The prediction:
$\kappa_{\mathrm{BP}}$ and $\kappa$ can sit on opposite sides of $1$.
At the above-threshold configuration of \cref{fig:curves}
($\kappa=2.25$, $\zeta=0.6$) the population dynamics gives
$\kappa_{\mathrm{BP}}=0.356<1$: BP's decision flips should then be
summable---its decision converges---even as depth continues to improve
the error toward the floor. Churn and improvement decouple above
threshold.
\cref{sec:discussion} restates the BP conjecture accordingly.

\paragraph{Finite graphs.} \cref{fig:graph} (right) repeats the
experiment on $12$ finite sparse CSBM graphs with $n=2\times10^4$ vertices,
classifying every vertex with the pairwise rule computed on true BFS
distances. (The sampler Poissonizes the intra- and inter-class pair
counts and draws pairs with replacement, collapsing collisions; the
expected discrepancy from the Bernoulli CSBM is $O(1)$ edges out of
$\approx2\times10^4$ at this size.) The graph means agree with the tree
limit within $\pm0.002$
uniformly over depths $\ell\le6$, illustrating \cref{rem:finite}:
depth conclusions drawn on the limit transfer directly to graphs of
practical size.

\paragraph{Non-monotonicity.} In both regimes the empirical curves attain
their minimum at a finite depth ($\ell^*=2$ below threshold, $\ell^*=4$
above, at $\zeta=0.6$) and then drift upward: e.g.\ $0.222\to0.237$
(below) and $0.070\to0.075$ (above) by $\ell=6$. The effect is small---as
\cref{thm:below} says it must be below threshold---but systematic,
and it grows with feature strength (at $\zeta=0.9$, below threshold:
$0.142$ at $\ell^*=2$ versus $0.178$ at $\ell=6$), consistent with the
correlation double-counting mechanism of \cref{obs:nonmono}:
stronger features make the correlated part of deep messages relatively
more damaging.

\begin{figure}[t]
\centering
\includegraphics[width=\linewidth]{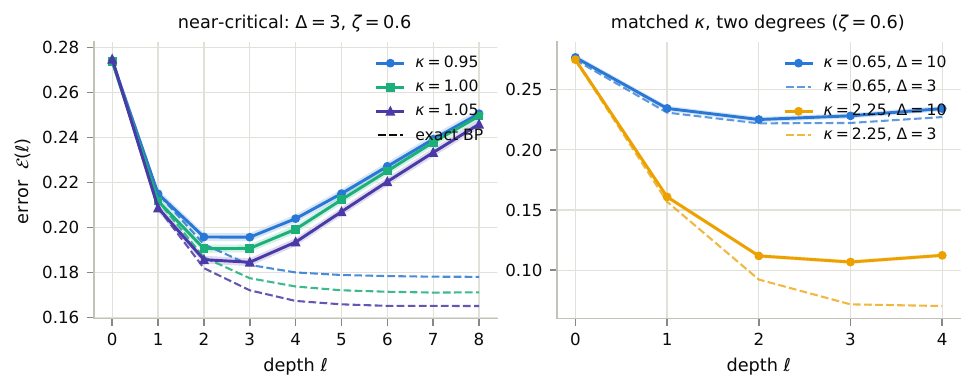}
\caption{\textbf{Left:} the near-critical window: $\err(\ell)$ for
$\kappa\in\{0.95,1.00,1.05\}$ ($\Delta=3$, $\zeta=0.6$, $2\times10^5$
trees per point, depths to $\ell=8$; bands are $\pm2$ standard errors).
The three pairwise curves have the same shape---minima $0.196$, $0.191$,
$0.185$ at $\ell\in\{2,3\}$, then matched upward drift---and nothing at
these depths signals which side of the threshold each sits on: the
dichotomy is an asymptotic statement, and near
$\kappa=1$ a critical window dominates all practical depths. Dashed:
exact BP on the same trees, monotone and flattening---separating the
pairwise rule's correlation double-counting (the drift) from genuine
exhaustion of fresh signal.
\textbf{Right:} a second degree ($1.5\times10^5$ trees per point).
Solid: $\Delta=10$ at the \emph{same}
Kesten--Stigum ratios as \cref{fig:curves}
($\kappa=0.65$ below, $2.25$ above; $\gamma$ adjusted accordingly,
$\zeta=0.6$). Dashed: the corresponding $\Delta=3$ curves. The dichotomy
is driven by $\kappa$, not by the degree: at matched $\kappa$ the
below-threshold curves saturate by $\ell=2$ at both degrees, while the
above-threshold curves fall several times further before the pairwise
drift sets in.}
\label{fig:critical}
\end{figure}

\paragraph{The near-critical window, and a second degree.}
Both theorems degenerate as $\kappa\to1$ ($C_1\sim(1-\kappa)^{-3}$ below;
the floor $\sim(\kappa-1)^{-1}$ above), so the near-critical regime is
where the theory says least. \cref{fig:critical} (left) probes it
directly: at $\kappa\in\{0.95,1.00,1.05\}$ the depth curves are
essentially interchangeable over $\ell\le8$---each falls to a minimum of
$0.185$--$0.196$ at $\ell^*\in\{2,3\}$ and then drifts upward, the
pairwise rule's correlation
double-counting swamping the vanishing fresh signal on both sides of the
threshold; by $\ell=8$ the drift has undone most of the graph's benefit
(errors $\approx0.25$ against $0.274$ at depth $0$). Exact BP on the
same trees (dashed) is monotone and essentially flat beyond
$\ell\approx4$: inside the window it is the pairwise rule's
correlation double-counting, not exhaustion of usable signal, that
shapes the curves. The honest reading:
the dichotomy of
\crefrange{thm:below}{thm:above} is invisible at practical
depths inside a critical window around $\kappa=1$. The width of that
window is in fact what the theorems themselves predict. Below
threshold, \eqref{eq:below-main} has content only once
$3B^{2/3}C_1^{1/3}\kappa^{\ell/3}\lesssim1$; since
$C_1^{1/3}\sim(1-\kappa)^{-1}$ and $\log(1/\kappa)\sim1-\kappa$ as
$\kappa\uparrow1$, the required depth scales as
$\ell\gtrsim 3\big(\log\tfrac{1}{1-\kappa}+O(1)\big)/(1-\kappa)$;
inverted, at depth $\ell$ the saturation statement bites only for
$1-\kappa\gtrsim c\log\ell/\ell$. Above threshold the floor
$16/(\kappa-1)$ exceeds~$1$ until $\kappa-1$ is of constant order.
At $|\kappa-1|=0.05$ and $\ell\le8$, \cref{fig:critical} (left)
sits squarely inside the predicted window on both sides, so the
observed flatness is the shape the theory predicts, not merely a
limitation of it. At $\kappa=1$ exactly
we conjecture polynomial-in-$\ell$ saturation, by analogy with critical
branching processes; the flat minimum in the data is consistent with
this but our depths cannot distinguish polynomial from slow geometric
behaviour. \cref{fig:critical} (right) varies the degree instead:
at $\Delta=10$ with $\gamma$ rescaled to keep $\kappa$ at the values of
\cref{fig:curves}, the same dichotomy appears---evidence that
$\kappa$, and not $(\gamma,\Delta)$ separately, organizes the value of
depth, as the theory asserts.

\paragraph{What the theorems certify at these parameters.}
The rates in \cref{thm:below,thm:above} are the
contribution of this paper; their \emph{constants} are not, and a reader
should know what the bounds certify at the parameters simulated above.
The answer is: little. \cref{tab:certify} evaluates them at
$\zeta=0.6$. Below threshold, the prefactor $3B^{2/3}C_1^{1/3}\approx7.3$
means \eqref{eq:below-main} drops below $\tfrac12$ only at $\ell\approx18$
and certifies $\epsilon=0.01$ saturation only at $\ell\approx45$, against
an empirical saturation depth of $2$--$3$; above threshold,
$\kappa=2.25<17$ makes the floor term alone exceed $1$ (indeed
$\kappa<\Delta=3$, so \emph{no} sparse configuration at this degree is
covered non-trivially by \cref{thm:above}). The experiments are
thus consistent with, but in no sense certified by, the theorems at these
parameters. What the data support is that the \emph{below-threshold
saturation rate} is sharp in the exponent ($\kappa^{\ell/2}$,
\cref{fig:flips}); above threshold no rate is measured---the
geometric term of \eqref{eq:above-main} is a truncation artifact and the
floor's location is unknown---and the constants are far from sharp in
both regimes. The one non-vacuous certification at these
parameters is \cref{prop:first}: evaluating
\eqref{eq:first-layer} at $\delta=\tau_1/4$ gives
$\err(0)-\err(1)\ge2.8\times10^{-3}$ below threshold and
$\ge9.6\times10^{-3}$ above, against measured drops of $0.044$ and
$0.118$.

\begin{table}[t]
\centering
\small
\begin{tabular}{@{}lcc@{}}
\toprule
& below KS ($\kappa=0.65$) & above KS ($\kappa=2.25$)\\
\midrule
\cref{thm:below}: prefactor $3B^{2/3}C_1^{1/3}$ & $7.3$ & ---\\
\quad depth for bound $\le\tfrac12$ / $\le0.01$ & $18$ / $45$ & ---\\
\quad empirical saturation depth & $2$--$3$ & ---\\
\cref{thm:above}: floor $16/(\kappa-1)$ & --- & $12.8$ ($>1$: vacuous)\\
\cref{prop:first}: certified $\err(0)-\err(1)$ &
$\ge0.0028$ & $\ge0.0096$\\
\quad measured $\err(0)-\err(1)$ & $0.044$ & $0.118$\\
\bottomrule
\end{tabular}
\caption{What the bounds certify at the simulated parameters ($\Delta=3$,
$\zeta=0.6$; Gaussian features, for which $B=(2\zeta\sqrt{2\pi})^{-1}
\approx0.33$, $\vartheta\approx0.27$, $C_1\approx1.3\times10^{2}$).}
\label{tab:certify}
\end{table}

\section{Discussion and open problems}\label{sec:discussion}

\paragraph{Design implications.} For sparse graphs, the analysis supports
three design rules. (1)~\emph{Depth should be logarithmic in the target
accuracy, not in the graph size}: $O(\log(1/\epsilon))$ layers suffice in
both regimes, and below the Kesten--Stigum threshold anything beyond a few
layers is provably immaterial for the pairwise rule. (2)~\emph{Attenuation,
not averaging}: the statistically-derived message at distance $k$ is
clipped to a window of width $O(\gamma^k)$; uniform neighbourhood averaging
(as in vanilla deep GCN stacks) ignores this schedule, and its well-known
degradation with depth may be read as the price of that mismatch.
(3)~\emph{Depth is no cure for weak features}: the floor
$e^{-\Delta}\varepsilon_{\mathrm{feat}}$ binds all local classifiers.

\paragraph{Sharpness, and what is missing.} We believe the saturation
exponent $\ell/3$ in
\cref{thm:below} is improvable to $\ell/2$ (matching the
simulations, and matching the flip-rate lower bound of
\cref{thm:fliplower}). The bottleneck is specifically the mean
part of the tail statistic: conditionally on the tree and labels, the
increment $R$ is a sum of bounded terms whose quadratic variation
$\sum_{k>\ell}S_k\gamma^{2k}$ has light (Poisson branching) tails, but
its conditional mean $\sum_{k>\ell}m_kD_k$ fluctuates on the scale
$\kappa^{\ell/2}$ with only the polynomial tail control that $L^2$
bounds on the Kesten--Stigum martingale provide; upgrading Chebyshev to
a conditional sub-Gaussian bound therefore requires exponential control
of $(D_k)$, which we do not have. The floor $16/(\kappa-1)$ in
\cref{thm:above} is certainly not tight (as stated, it is
non-vacuous only for $\kappa>17$, hence only for average degree above
$17$; \cref{tab:certify} spells out the consequence at the
simulated parameters). One route to a floor of the right order: the
bound charges the bad branching event $E_W^c$ with error $1$, although
the classifier retains the root's own feature evidence there, so
multiplying $\Prob(E_W^c)$ by a non-trivial conditional error---which
requires left-tail control of the Kesten--Stigum limit $W_\infty$ near
$0$---should shrink the floor toward the universal
$e^{-\Delta}\varepsilon_{\mathrm{feat}}$ of
\cref{prop:floor}. Small-value and harmonic-moment estimates
for $W_\infty$ are classical for supercritical Galton--Watson
processes, so the program is plausible; what is missing is a version
with explicit constants at moderate degree, and we have not pursued
it. Above the threshold two basic questions
remain open: the \emph{true} rate at which $\err(\ell)$ approaches its
large-$\ell$ behaviour (our geometric term is a free-parameter
truncation cost, and the signal-driven terms are super-geometric), and
the existence of $\lim_{\ell\to\infty}\err(\ell)$ itself, which our
Cauchy argument establishes only below the threshold while the empirical
drift of \cref{obs:nonmono} shows the sequence is not
eventually monotone. On \emph{lower} bounds, the ledger after
\cref{thm:fliplower} and the certified instance of
\cref{app:certified} reads: decision churn at rate $\kappa^{\ell/2}$ is proven, and
non-monotonicity is proven in an exact instance; what remains open is a
lower bound on the \emph{error} scale---a statement of the form
$|\err(\ell)-\err(\infty)|\ge c\,\kappa^{C\ell}$ for some instance,
which would make the $O(\log(1/\epsilon))$ prescription two-sided.
Flips do not imply it: the two directions of decision change nearly
cancel in $\err(\ell)$, so the error difference can in principle decay
faster than the churn, and the simulations cannot distinguish these
possibilities at accessible depths.

\paragraph{Exact belief propagation.} \cref{rem:bp} isolates where
the pairwise rule and exact BP part ways, and the data-processing
inequality guarantees BP's depth curve is monotone; the BP baseline of
\cref{sec:experiments} exhibits exactly this, pulling ahead of
the pairwise rule by
$0.02$--$0.03$ error at depth $6$. On rates, the flip measurements of
\cref{fig:flips} \emph{revise} our earlier expectation: BP's
decisions settle strictly faster than the pairwise rule's, in
quantitative agreement with the annealed effective ratio
$\kappa_{\mathrm{BP}}=\Delta\,\E[F_\gamma'(L)^2]<\kappa$. We accordingly
conjecture: the dichotomy for BP is still organized by $\kappa$ (the
threshold is $\kappa=1$, as in robust reconstruction
\citep{janson2004robust}), but its below-threshold saturation exponent
is governed by $\kappa_{\mathrm{BP}}$, the pairwise $\kappa^{\ell/2}$
being only an upper envelope, approached as the feature evidence
vanishes ($\kappa_{\mathrm{BP}}\uparrow\kappa$ as
$\vartheta\downarrow0$). Proving this requires controlling the
nonlinear recursion under the stationary message law---the
information-contraction arguments of \citet{evans2000broadcasting} give
the $\kappa$ envelope; sharpening them to the stationary derivative law
is open, and we leave it to future work.

\paragraph{Extensions.} The heterophilous case $b>a$ follows by replacing
$\gamma$ with $-\gamma$: messages then alternate sign with distance parity
and $\kappa=\gamma^2\Delta$ is unchanged, so the dichotomy is untouched---a
statistical argument for signed, parity-aware aggregation on heterophilous
graphs. Natural next steps include: multi-class models, where $\gamma$ is
replaced by the second eigenvalue $\lambda$ of the class-transition matrix
and $\kappa=\lambda^2\Delta$; the semi-supervised setting, where revealed
labels act as clamped messages and interact with the threshold in the
spirit of \citet{kanade2016global}; and unbalanced classes. The last of these is not routine: the engine of
\cref{thm:below} is the exact cancellation supplied by
\cref{ass:sym}, and under class asymmetry the mean message of
generation $k$ acquires a label-\emph{independent} drift
$\sum_k S_k(m_k^++m_k^-)/2$-type term that $\sgn(T_\ell)$ does not
re-center; whether saturation survives for a suitably recentred
statistic, or the below-threshold picture is genuinely brittle to
asymmetry, is open.

\section*{Acknowledgements}
This manuscript develops a question left open in the author's doctoral
thesis \citep{baranwal2024thesis}; the framework of asymptotic local
classification on sparse feature-decorated graphs is joint work with Kimon
Fountoulakis and Aukosh Jagannath, whose influence on this line of inquiry
is gratefully acknowledged.

\bibliographystyle{plainnat}
\bibliography{references}

\appendix
\crefalias{section}{appendix}

\section{Complements to the below-threshold analysis}
\label{app:complements}

The proof of \cref{thm:below} gives, verbatim, the following
form of the saturation bound, which does not use
\cref{ass:density}:
\[
\big|\err(\ell)-\err(\ell')\big|\;\le\;\inf_{s>0}\;
\Big\{\,\mathcal{Q}_{T_{\ell'}}(s)\;+\;\frac{C_1\kappa^{\ell+1}}{s^2}\Big\},
\qquad
\mathcal{Q}_X(w):=\sup_{a\in\R}\Prob\big(X\in(a,a+w]\big),
\]
with \cref{ass:density} entering only to evaluate the L\'evy
concentration function as $\mathcal{Q}_{T_{\ell'}}(s)\le 2Bs$ through
the root's unbounded evidence. For atomic (e.g.\ discrete) feature laws
the form with $\mathcal{Q}$ is the correct statement of
\cref{thm:below}.

\subsection{Proof of Theorem~\ref{thm:fliplower}}

\begin{proof}
Condition on $y_o=+1$ (\cref{lem:symmetry}) and write
$T_{\ell+1}=T_\ell+R$ with $R=\sum_{v\in N_{\ell+1}}M_{\ell+1}(X_v)$.
Conditionally on $\mathcal{G}$, the variables $T_\ell$ and $R$ are
independent (they involve disjoint sets of features). Fix
$w:=\kappa^{(\ell+1)/2}\le1$ and observe the inclusion
\[
\big\{T_\ell\in(0,w]\big\}\cap\big\{R\le-2w\big\}
\;\subseteq\;\big\{h_\ell=+1,\;h_{\ell+1}=-1\big\}
\;\subseteq\;\big\{h_\ell\neq h_{\ell+1}\big\},
\]
since on the left event $T_{\ell+1}\le w-2w<0$. We lower-bound the
probability of the left event by conditioning on a good
$\mathcal{G}$-measurable event.

\emph{Step 1: the good event.} Let
$p_2:=\tfrac{\Delta-1}{4\Delta}$, $V_0:=\tfrac{8C_1\kappa}{p_2}$,
$M:=\sqrt{\tfrac{8}{p_2(1-\kappa)}}$, and set
\[
G_1=\Big\{\E\big[(T_\ell-\log\psi(X_o))^2\,\big|\,\mathcal{G}\big]\le
V_0\Big\},\qquad
G_2=\Big\{S_{\ell+1}\ge\tfrac12\Delta^{\ell+1}\Big\},
\]
\[
G_3=\Big\{|D_{\ell+1}|\le M(\gamma\Delta)^{\ell+1}\kappa^{-(\ell+1)/2}\Big\}.
\]
By \cref{lem:tail} applied with $(\ell,\ell')=(0,\ell)$ and Markov's
inequality, $\Prob(G_1^c)\le C_1\kappa/V_0=p_2/8$. Writing
$\widetilde{W}_j:=S_j/\Delta^j$, the shape martingale satisfies
$\E\widetilde{W}_j=1$ and
$\E\widetilde{W}_j^2\le1+\tfrac1{\Delta-1}$ (the computation of
\cref{lem:ks}(iii) with $\gamma=1$), so the Paley--Zygmund
inequality gives
$\Prob(G_2)\ge\tfrac{(1/2)^2}{\E\widetilde W_j^2}
\ge\tfrac{\Delta-1}{4\Delta}=p_2$, uniformly in $\ell$. By
\cref{lem:ks}(iv), $\E D_{\ell+1}^2\le
(\gamma\Delta)^{2(\ell+1)}\kappa^{-(\ell+1)}/(1-\kappa)$, so
$\Prob(G_3^c)\le\tfrac{1}{M^2(1-\kappa)}=p_2/8$. Hence
$G:=G_1\cap G_2\cap G_3$ has $\Prob(G)\ge3p_2/4$.

\emph{Step 2: the root passes near zero.} Conditionally on
$\mathcal{G}$, $T_\ell=\log\psi(X_o)+U$ with $\log\psi(X_o)$ independent
of $(\mathcal{G},U)$ and, on $G_1$, $\E[U^2|\mathcal{G}]\le V_0$, so
$\Prob(|U|\le\sqrt{2V_0}\,|\,\mathcal{G})\ge\tfrac12$. Since
$\sqrt{2V_0}+w\le 8\sqrt{C_1\kappa\Delta/(\Delta-1)}+1=K_0$, on
$\{|U|\le\sqrt{2V_0}\}$ the interval $(-U,-U+w]$ lies in $[-K_0,K_0]$,
where the density of $\log\psi(X_o)$ is at least $\beta$; hence on $G_1$,
\begin{equation}\label{eq:root-window}
\Prob\big(T_\ell\in(0,w]\,\big|\,\mathcal{G}\big)\;\ge\;\tfrac12\,\beta w.
\end{equation}

\emph{Step 3: the new generation swings low.} Conditionally on
$\mathcal{G}$, $R$ is a sum of $S_{\ell+1}$ independent terms, each of
range $\le2c_\gamma\gamma^{\ell+1}$, with conditional mean
$\mu_R=m_{\ell+1}D_{\ell+1}$ and conditional variance
$\sigma_R^2=\sum_{v\in N_{\ell+1}}\Var(M_{\ell+1}(X_v)\,|\,y_v)$. Since
$\artanh(u)=u+u^3\theta/(3(1-u^2))$ with $\theta\in[0,1]$, we have
$M_k(x)=2\gamma^k\big(t(x)+e_k(x)\big)$ with
$|e_k|\le\gamma^{2k}/(3(1-\gamma^2))=:\delta_k$, so for
$k\ge k_1:=\min\{k:\delta_k\le\sigma_t/2\}$,
$\Var(M_k(X)\,|\,y)\ge4\gamma^{2k}(\sigma_t-\delta_k)^2\ge
\gamma^{2k}\sigma_t^2$ under either class (the two classes agree by
\cref{ass:sym}). Hence on $G_2$, for $\ell+1\ge k_1$,
\[
\sigma_R^2\;\ge\;\tfrac12\Delta^{\ell+1}\cdot\gamma^{2(\ell+1)}\sigma_t^2
=\tfrac{\sigma_t^2}{2}\,\kappa^{\ell+1},
\qquad\text{while on }G_3\quad
|\mu_R|\le c_\gamma\sqrt{\vartheta}\,M\,\kappa^{(\ell+1)/2}
\]
using $m_{\ell+1}\le c_\gamma\gamma^{\ell+1}\sqrt{\vartheta}$
(\cref{lem:messages}(iii)). Therefore
$(2w+|\mu_R|)/\sigma_R\le z_0:=
\sqrt{2}\,\big(2+c_\gamma\sqrt{\vartheta}M\big)/\sigma_t$, and the
Berry--Esseen inequality for sums of independent, non-identically
distributed bounded variables \citep[Ch.~V]{petrov1995limit} gives, with
an absolute constant $C_0$,
\[
\Prob\big(R\le-2w\,\big|\,\mathcal{G}\big)
\;\ge\;\Phi(-z_0)\;-\;C_0\,
\frac{2c_\gamma\gamma^{\ell+1}}{\sigma_R}
\;\ge\;\Phi(-z_0)-\frac{2\sqrt2\,C_0c_\gamma}{\sigma_t}\,
\Delta^{-(\ell+1)/2},
\]
where we used $\sum_v\E|M_v-\E M_v|^3\le
2c_\gamma\gamma^{\ell+1}\sigma_R^2$ and
$\gamma^{\ell+1}/\kappa^{(\ell+1)/2}=\Delta^{-(\ell+1)/2}$. Let
$\ell_0\ge k_1$ be the smallest depth from which the Berry--Esseen
correction is at most $\Phi(-z_0)/2$; then
$\Prob(R\le-2w\,|\,\mathcal{G})\ge\Phi(-z_0)/2=:p_3$ on $G_2\cap G_3$
for $\ell\ge\ell_0$.

\emph{Step 4: assemble.} By conditional independence, for
$\ell\ge\ell_0$,
\[
\Prob\big(h_\ell\neq h_{\ell+1}\big)
\;\ge\;\E\Big[\mathbf{1}_G\,
\Prob\big(T_\ell\in(0,w]\,\big|\,\mathcal{G}\big)\,
\Prob\big(R\le-2w\,\big|\,\mathcal{G}\big)\Big]
\;\ge\;\frac{3p_2}{4}\cdot\frac{\beta w}{2}\cdot p_3
\;=\;c\,\kappa^{\ell/2},
\]
with $c=\tfrac38\,p_2\,\beta\,p_3\,\sqrt{\kappa}$. For the Gaussian
mixture, $\sigma_t^2=\Var\big(\tanh(Z/2)\big)>0$ for
$Z\sim\mathcal{N}(2\zeta^2,4\zeta^2)$ and the density of
$\log\psi(X)$ is bounded below on $[-K_0,K_0]$ by
$\beta=\varphi\big((K_0+2\zeta^2)/(2\zeta)\big)/(2\zeta)$, with
$\varphi$ the standard normal density.
\end{proof}

\section{The first layer: proof and refinements}\label{app:first}

\subsection{Proof of Proposition~\ref{prop:first}}

\begin{proof}
Write the depth-$1$ observation as $(X_o,\mathcal{B})$ with
$\mathcal{B}=\big(S_1,(X_v)_{v\in N_1}\big)$, the children listed in
exchangeable order; conditionally on $y_o$, $\mathcal{B}$ is independent
of $X_o$. Let $Q_\pm$ denote the conditional law of $\mathcal{B}$ given
$y_o=\pm1$. Three structural facts drive the proof.

\emph{(a) $h_1$ is Bayes for $(X_o,\mathcal{B})$.} By Poisson thinning,
$S_1\sim\Poi(\Delta)$ has the same law under both classes, and given
$y_o$ the children's features are i.i.d.\ with law
$R_{\pm}=\tfrac{1\pm\gamma}{2}\mathbb{P}_{+}+\tfrac{1\mp\gamma}{2}
\mathbb{P}_{-}$ under $y_o=\pm1$. The likelihood ratio of a single child
feature is
$\frac{dR_+}{dR_-}(x)=\frac{(1+\gamma)\rho_++(1-\gamma)\rho_-}
{(1-\gamma)\rho_++(1+\gamma)\rho_-}(x)
=\frac{1+\gamma t(x)}{1-\gamma t(x)}$,
whose logarithm is $2\artanh(\gamma t(x))=M_1(x)$. By conditional
independence, the posterior log-likelihood ratio of $y_o$ given
$(X_o,\mathcal{B})$ is exactly
$\log\psi(X_o)+\sum_{v\in N_1}M_1(X_v)=T_1$, so $h_1=\sgn(T_1)$ is a
Bayes rule for the depth-$1$ observation; since ties are null,
$\err(1)$ \emph{is} the corresponding Bayes error, and likewise
$\err(0)=\E[\min(p,1-p)]$ for $p:=\Prob(y_o=+1\,|\,X_o)$.

\emph{(b) Conditional Bayes decomposition.} Fix $X_o$ and let $q_\pm$ be
densities of $Q_\pm$ with respect to a common base measure $\nu$ (which
exist: $S_1$ is shared and $R_+,R_-$ are mutually absolutely continuous by
\cref{ass:sym}). With $f_p:=p\,q_+-(1-p)\,q_-$, so that $\int
f_p\dd\nu=2p-1$, the identity $\int|f_p|=|{\textstyle\int}f_p| +
2\int (f_p)_{\mp}$ (with the sign opposite to that of $2p-1$) gives
\[
\E\big[\text{error of Bayes rule}\,\big|\,X_o\big]
=\int \min\big(p\,q_+,(1-p)\,q_-\big)\dd\nu
=\min(p,1-p)\;-\;\Xi(p),
\]
where $\Xi(p):=\int(f_p)_-\dd\nu\ge0$ for $p\ge\tfrac12$ and
$\int(f_p)_+\dd\nu$ for $p<\tfrac12$. Taking expectations,
$\err(0)-\err(1)=\E[\Xi(p)]\ge0$, which is the monotonicity claim.

\emph{(c) $\Xi$ is large when the root is uncertain.} Suppose
$|2p-1|\le\delta$ and $p\ge\tfrac12$ (the other case is symmetric). Then
$p\le\tfrac{1+\delta}{2}$ and $1-p\ge\tfrac{1-\delta}{2}$, so pointwise
$-f_p\ \ge\ \tfrac12(q_--q_+)-\tfrac{\delta}{2}(q_++q_-)$, and using
$(u-v)_+\ge u_+-v$ for $v\ge0$ and $\int(q_++q_-)\dd\nu=2$,
\[
\Xi(p)\;\ge\;\tfrac12\int (q_--q_+)_+\dd\nu \;-\;\delta
\;=\;\tfrac12\,\mathrm{TV}(Q_+,Q_-)\;-\;\delta.
\]
Since $S_1$ is observed and label-independent,
$\mathrm{TV}(Q_+,Q_-)=\sum_{s\ge0}\Prob(S_1=s)\,
\mathrm{TV}\big(R_+^{\otimes s},R_-^{\otimes s}\big)
\ge\Prob(S_1\ge1)\,\mathrm{TV}(R_+,R_-)$, by restricting to the first
coordinate; and $R_+-R_-=\gamma\,(\mathbb{P}_+-\mathbb{P}_-)$ gives
$\mathrm{TV}(R_+,R_-)=\gamma\,\tau_{\mathrm{feat}}$ exactly. Hence
$\mathrm{TV}(Q_+,Q_-)\ge\tau_1$.

Finally, $2p-1=\tanh\big(\tfrac12\log\psi(X_o)\big)$, so
$\{|2p-1|\le\delta\}=\{|\log\psi(X_o)|\le\log\tfrac{1+\delta}{1-\delta}\}$,
and combining (b) and (c),
\[
\err(0)-\err(1)\;\ge\;
\E\Big[\Xi(p)\,\mathbf{1}\{|2p-1|\le\delta\}\Big]
\;\ge\;\Big(\frac{\tau_1}{2}-\delta\Big)\,
\Prob\Big(|\log\psi(X_o)|\le\log\tfrac{1+\delta}{1-\delta}\Big).
\qedhere
\]
\end{proof}

\begin{remark}[many children, and where the slack lies]\label{rem:tensor}
The definition of $\tau_1$ keeps a single child. Tensorizing instead
through the Hellinger distance---$\mathrm{TV}\ge H^2/2$ applied to
$R_+^{\otimes s}$ versus $R_-^{\otimes s}$, the product rule
$1-\tfrac12H^2(R_+^{\otimes s},R_-^{\otimes s})
=(1-\tfrac12H^2)^{s}$, and the Poisson generating function over
$S_1$---gives the closed form
\[
\mathrm{TV}(Q_+,Q_-)\;\ge\;1-e^{-\Delta H^2/2}\;\ge\;1-e^{-\kappa\vartheta/2},
\qquad\text{where}\quad
H^2:=H^2(R_+,R_-)\;\ge\;\gamma^2\vartheta,
\]
with $H^2=2\big(1-\E_{\mathrm{mix}}\sqrt{1-\gamma^2t(X)^2}\,\big)$,
$\E_{\mathrm{mix}}$ the expectation under the balanced feature
mixture, and $R_\pm$ the single-child laws from the proof above: the Kesten--Stigum ratio reappears as the
first layer's information budget. Either bound may dominate: the tensorized one tends to $1$ as
$\Delta\to\infty$ while the one-child bound stays below
$\gamma\tau_{\mathrm{feat}}$, but at the simulated parameters of
\cref{sec:experiments} the one-child bound is larger ($0.200$
versus $0.088$ below threshold, $0.372$ versus $0.294$ above). Neither
is the main source of slack in \eqref{eq:first-layer}: a Monte Carlo
evaluation of the \emph{true} total variation of the depth-$1$ data
($0.32$ below, $0.57$ above, at $\zeta=0.6$) raises the certified
improvement only to ${\approx}\,0.007$ and ${\approx}\,0.023$
respectively, still well short of the measured $0.044$ and $0.118$. The
dominant slack is the restriction to the near-tie event: the proof
credits the layer only where the root is nearly undecided, whereas the
true improvement $\err(0)-\err(1)=\E[\Xi(p)]$ (in the notation of the
proof) accrues at every posterior $p$.
\end{remark}

\section{A certified instance of non-monotonicity}\label{app:certified}

\begin{proposition}[non-monotonicity certified: an exact instance]
\label{prop:nonmonocert}
Consider the two-point feature law $t(X)\in\{\pm t_0\}$ with
$\mathbb{P}_+\big(t(X)=t_0\big)=\tfrac{1+t_0}{2}$ (which satisfies
\crefrange{ass:sym}{ass:informative}), at
$(\Delta,\gamma,t_0)=(3,\,0.55,\,0.95)$, so that $\kappa=0.9075<1$. Then
\[
\err(0)=0.025,\qquad
\err(1)=0.02261938\ldots,
\]
\[
\err(2)\in[0.03870641,\,0.03870642],\qquad
\err(3)\in[0.07589622,\,0.07589623],
\]
and in particular
\[
\err(3)-\err(2)\;\ge\;0.0371898\;>\;0
\qquad\text{and}\qquad
\err(2)-\err(1)\;\ge\;0.0160\;>\;0:
\]
the depth-error curve of the pairwise rule strictly increases from depth
$1$ to depth $3$. The proof is the finite, certified computation described below; the
two-point law has atoms, so this
instance also illustrates the discrete-feature regime outside
\cref{ass:density}.
\end{proposition}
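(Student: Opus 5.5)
Because $t(X)$ takes only the two values $\pm t_0$, every message $M_k(X_v)$ takes only the two values $\pm a_k$ with $a_k:=2\artanh(\gamma^k t_0)$, and $M_0=\pm a_0$ with $a_0=2\artanh(t_0)=\log 39$. I will therefore compute $\err(\ell)$ exactly as a finite (after truncation) sum over discrete configurations, conditioning on $y_o=+1$ by \cref{lem:symmetry}. First I check the hypotheses: the involution swapping the two atoms gives \cref{ass:sym}, and $\vartheta=t_0^2>0$. At depth $0$, $\err(0)=\Prob_+(t(X)=-t_0)=(1-t_0)/2=0.025$.

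The general reduction is the following. A vertex of relative label $\sigma_v$ emits $+a_k$ with probability $(1+\sigma_v t_0)/2$. Composing this with the broadcast channel, a depth-$k$ vertex has $\Prob(\text{emits }+a_k\mid y_o=+1)=(1+\gamma^k t_0)/2=:p_k$ marginally. The emissions are not independent across vertices, because they share ancestors. I will handle this with a generating-function recursion over the tree rather than over generations: for a vertex $v$ at depth $j$ with label $\sigma$, define the law of its subtree's contribution $Z_v=\sum_{w\in\text{subtree},\,d(o,w)\le\ell}M_{d(o,w)}(X_w)$. This is a finitely supported measure on the lattice $\{\sum_k n_k a_k : n_k\in\mathbb{Z}\}$. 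The recursion $Z_v=M_j(X_v)+\sum_{\text{children}}Z_w$ with $\Poi(a/2)$ same-label and $\Poi(b/2)$ opposite-label children turns into a compound-Poisson formula for the characteristic function,
\[
\phi^{\sigma}_j(u)=\E[e^{iuM_j}\mid\sigma]\exp\Big(\tfrac a2(\phi^{\sigma}_{j+1}(u)-1)+\tfrac b2(\phi^{-\sigma}_{j+1}(u)-1)\Big),
\]
initialised with $\phi^{\pm}_{\ell+1}\equiv1$. Then $\err(\ell)=\Prob(Z_o\le0\mid\sigma=+)$.

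For $\ell=1$ the law is explicit: $Z_o=\pm a_0+a_1(2J-S_1)$ with $J\sim\Poi(\Delta p_1)$ and $S_1-J\sim\Poi(\Delta(1-p_1))$ independent. The error is then a sum of Skellam tail probabilities, which I would evaluate in closed form to get $0.02261938\ldots$. For $\ell=2,3$ I would enumerate the support exactly: the atoms are indexed by the integer vector of signed counts $(n_1,\dots,n_\ell)$. The sign of $a_0\epsilon_0+\sum n_k a_k$ must be decided exactly, using interval arithmetic on $a_k$, and is generically bounded away from $0$. Accumulating probability mass over the region of nonpositive sum, with Poisson tails truncated at explicit counts, gives rigorous two-sided bounds on $\err(2)$ and $\err(3)$. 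The truncation error is bounded via Poisson/Chernoff tails on the total sizes $S_1+\dots+S_\ell$. Equivalently, I would recurse bottom-up, storing for each depth the joint pmf of the counts vector conditioned on the subtree root's label, convolving over Poisson children, and truncating each count at a level where the discarded mass is below $10^{-9}$.

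The main obstacle is controlling the ties and near-ties. Exact zeros $a_0\epsilon_0+\sum n_ka_k=0$ must be excluded or counted (ties count as errors), which requires certifying the rational independence of the $a_k$ on the relevant finite range of coefficients. The total truncation slack must also be kept below the stated $10^{-8}$ widths. The decisive inequalities $\err(3)-\err(2)\ge0.0371898$ and $\err(2)-\err(1)\ge0.0160$ then follow by subtracting the interval endpoints.
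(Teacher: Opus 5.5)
Your proposal is correct and is essentially the paper's own argument: reduce $T_\ell$ to the integer lattice of signed counts $(s_0,m_1,\dots,m_\ell)$, build their joint law bottom-up by Poisson thinning and finite convolutions on truncated boxes with every discarded Poisson tail charged to an explicit error ledger, and subtract the resulting brackets. Your values $\err(0)=(1-t_0)/2$ and the closed-form Skellam evaluation of $\err(1)$ also check out. The differences are only organizational. The paper convolves level by level, using Skellam and binomial groups and a per-child law $A(m_2,m_3\mid\sigma_1)$, and simply counts lattice points with $|T_\ell|<10^{-8}$ as errors. You instead decide signs exactly. That is feasible because each $e^{a_k}=(1+\gamma^k t_0)/(1-\gamma^k t_0)$ is rational, so each sign reduces to an integer comparison. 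What you need is exact sign decisions, not rational independence: ties count as errors anyway.
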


Under the two-point law, the message of a vertex at distance $k$ is
$c_k s_v$ with $c_k=2\artanh(\gamma^kt_0)$ and $s_v\in\{\pm1\}$,
$\Prob(s_v=+1\,|\,\sigma_v)=\tfrac{1+t_0\sigma_v}{2}$; hence
$T_\ell=\sum_{k\le\ell}c_km_k$ with $m_k=\sum_{v\in N_k}s_v\in\Z$, and
$\err(2),\err(3)$ are determined by the joint law of
$(s_0,m_1,m_2,m_3)$ on a lattice. That law is a finite object up to
Poisson truncation, by repeated use of thinning:
\begin{enumerate}[leftmargin=2em,itemsep=0.1em]
\item the level-$3$ signs attached to one level-$2$ vertex of relative
label $\sigma_2$ sum to a $\mathrm{Skellam}\big(3q,3(1-q)\big)$ variable,
$q=\tfrac{1+\gamma t_0\sigma_2}{2}$;
\item a group of $n$ level-$2$ vertices of common label contributes an
independent pair: the level-$2$ signs sum to
$2\,\mathrm{Binom}(n,p_2)-n$ with $p_2=\tfrac{1+t_0\sigma_2}{2}$, and
their level-$3$ children to $\mathrm{Skellam}(3nq,3n(1-q))$;
\item each level-$1$ vertex's children split into independent
$\Poi\big(\tfrac{\Delta(1+\gamma)}{2}\big)$ same-label and
$\Poi\big(\tfrac{\Delta(1-\gamma)}{2}\big)$ flipped-label groups;
convolving the two groups gives the per-child law
$A(m_2,m_3\,|\,\sigma_1)$, with $A(\cdot\,|\,{-}1)$ the reflection of
$A(\cdot\,|\,{+}1)$ by global sign symmetry;
\item conditioning on the root's group sizes $(i,j)$ makes $m_1$ an
independent sum of two shifted binomials and $(m_2,m_3)$ the convolution
$A(\cdot|{+}1)^{*i}*A(\cdot|{-}1)^{*j}$.
\end{enumerate}
The computation evaluates these finite convolutions on truncated integer
boxes with a strict accounting rule: every distribution is kept as a
\emph{sub}-probability and never renormalized, so each array's mass
deficit is exactly the truncation error it carries; the Poisson weights
beyond the group-size caps ($i\le16$, $j\le9$, level-$2$ groups
$\le20$/$\le12$) are added to the error ledger as-is; lattice points
with $|T_\ell|<10^{-8}$ are counted as errors (the tie-conservative
side); and a conservative floating-point budget covers rounding. The
resulting certified brackets are those displayed in
\cref{prop:nonmonocert}, with total ledger below
$5\times10^{-9}$---six orders of magnitude below the certified gap. Two
independent checks: the reflection symmetry of step 3 is verified by
building both laws separately ($L^1$ discrepancy $5\times10^{-16}$), and
a $4\times10^5$-tree Monte Carlo run reproduces all four values within
two standard errors. The script (\texttt{code/nonmono\_certify.py})
runs in minutes.

\end{document}